\numberwithin{equation}{section}
\newtheorem{conjecture}{Conjecture}[section]
\newtheorem{theorem}{Theorem}[section]
\newtheorem{coro}[theorem]{Corollary}
\newtheorem{proposition}[theorem]{Proposition}
\theoremstyle{definition}
\newtheorem{definition}[theorem]{Definition}
\newcommand{\Z}{\mbox{$\mathbb Z$}}
\newcommand{\Q}{\mbox{$\mathbb Q$}}
\newcommand{\R}{\mbox{$\mathbb R$}}     
\newcommand{\C}{\mbox{$\mathbb C$}}     
\begin{document}

	\title[UNIFORM BOUNDS ON $S$-INTEGRAL PREPERIODIC POINTS]{UNIFORM BOUNDS ON $S$-INTEGRAL PREPERIODIC POINTS FOR CHEBYSHEV polynomials} 

\author[R. Padhy]{R. Padhy}
\address{Rudranarayan Padhy, Department of Mathematics, National Institute of Technology Calicut, 
Kozhikode-673 601, India.}
\email{rudranarayan\_p230169ma@nitc.ac.in; padhyrudranarayan1996@gmail.com}

\author[S. S. Rout]{S. S. Rout}
\address{Sudhansu Sekhar Rout, Department of Mathematics, National Institute of Technology Calicut, 
	Kozhikode-673 601, India.}
\email{sudhansu@nitc.ac.in; lbs.sudhansu@gmail.com}

\thanks{2020 Mathematics Subject Classification: Primary 37F10, Secondary 11G50, 11J71, 11J86. \\
	Keywords: Chebyshev polynomial; equidistribution; integral points; preperiodic points; The Arkelov-Zhang pairing; linear forms in logarithms\\
	Work supported by NBHM grant (Sanction Order No: 14053).}

\begin{abstract}
Let $K$ be a number field with algebraic closure $\bar{K}$, let $S$ be a finite set of places of $K$ containing the archimedean places, and  let $\varphi$ be Chebyshev polynomial. In this paper we prove uniformity results on the number of $S$-integral preperiodic points relative to a non-preperiodic point $\beta$, as $\beta$ varies over number fields of bounded  degree. 
\end{abstract}

\maketitle
\pagenumbering{arabic}
\pagestyle{headings}

\section{Introduction}
 Let $K$ be a number field with algebraic closure $\bar{K}$ and let $S$ be a finite set of places of $K$ containing all the archimedean places of $K$ and let $\alpha, \beta \in \bar{K}$. We say that $\beta$ is $S$-integral relative to $\alpha$ if no conjugate of $\beta$ meets any conjugate of $\alpha$ at primes lying outside of $S$. For a rational map $\varphi: \mathbb{P}^1 \to \mathbb{P}^1$ defined over a field $K$, we say that a point $x\in \mathbb{P}^{1}(\bar{K})$ is preperiodic if there exist distinct positive integers $m, n$ such that $\varphi^m(x) = \varphi^n(x)$. We denote by $\mbox{PrePer}(\varphi, \bar{K})$ for the set of preperiodic points of $\varphi$ on $\mathbb{P}^1(\bar{K})$.
 
 In \cite{baker2008}, the following conjecture was made.
 \begin{conjecture}\label{conj1}
 Let $K$ be a number field, and let $S$ be a finite set of places of $K$ that contains all the archimedean places. If $\varphi: \mathbb{P}_K^{1}\to \mathbb{P}_K^{1}$ is a non constant rational function of degree $d\geq 2$ and $\alpha\in \mathbb{P}^{1}(K)$ is non-preperiodic for $\varphi$, then there are at most finitely many preperiodic points $\beta \in \mathbb{P}^{1}(\bar{K})$ that are $S$-integral with respect to $\alpha$.
 \end{conjecture}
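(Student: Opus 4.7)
The plan is to establish Conjecture~\ref{conj1} by contradiction, combining equidistribution of points of small canonical height with positivity of the Arakelov--Zhang pairing. Assume there are infinitely many preperiodic points $\beta_n\in\mathbb{P}^{1}(\bar K)$ that are $S$-integral with respect to $\alpha$. By Northcott's theorem applied to $\hat h_\varphi$, the set $\mathrm{PrePer}(\varphi,\bar K)$ has bounded Weil height, so the degrees $[K(\beta_n):K]$ necessarily tend to infinity. For each place $v$ of $K$, let $\mu_{\varphi,v}$ denote the canonical invariant probability measure of $\varphi$ on the Berkovich analytification $\mathbb{P}^{1,\mathrm{an}}_{K_v}$, and let $\mu_{n,v}$ denote the uniform probability measure on the $\mathrm{Gal}(\bar K/K)$-orbit of $\beta_n$. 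Since $\hat h_\varphi(\beta_n)=0$, the arithmetic equidistribution theorem of Baker--Rumely, Favre--Rivera-Letelier, and Chambert-Loir gives $\mu_{n,v}\to\mu_{\varphi,v}$ weakly for every $v$.

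The next step is to exploit the $S$-integrality by testing these measures against $f_{\alpha,v}(x):=-\log[x,\alpha]_v$, where $[\cdot,\cdot]_v$ is the $v$-adic chordal metric. The hypothesis is equivalent to the vanishing $f_{\alpha,v}(\gamma)=0$ for every $v\notin S$ and every Galois conjugate $\gamma$ of each $\beta_n$, hence $\int f_{\alpha,v}\,d\mu_{n,v}=0$ whenever $v\notin S$. Summing the product-formula expansion of $f_{\alpha,v}$ over all places, one finds that $\sum_v\int f_{\alpha,v}\,d\mu_{n,v}=h(\beta_n)+h(\alpha)+O(1)$, uniformly bounded because preperiodic points have bounded Weil height. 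Applying weak convergence at each $v\in S$ and using that the contributions from $v\notin S$ vanish, this sum should converge to the Arakelov--Zhang pairing $\langle\varphi,\alpha\rangle:=\sum_v\int f_{\alpha,v}\,d\mu_{\varphi,v}$. By the positivity theorem of Petsche--Szpiro--Tucker, $\langle\varphi,\alpha\rangle>0$ precisely because $\alpha$ is non-preperiodic. Reconciling this strictly positive limit with the alternative evaluation of the same sum, which via the local canonical height decomposition of $\varphi$ must remain concentrated only at places of $S$, yields the desired contradiction.

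The principal obstacle is analytic. The test function $f_{\alpha,v}$ has a logarithmic singularity at $\alpha$, so weak convergence $\mu_{n,v}\to\mu_{\varphi,v}$ does not immediately imply $\int f_{\alpha,v}\,d\mu_{n,v}\to\int f_{\alpha,v}\,d\mu_{\varphi,v}$. The standard remedy is to truncate $f_{\alpha,v}$ at level $N$ inside a small $v$-adic neighborhood of $\alpha$, apply equidistribution to the continuous truncation, and control the truncation error by bounding, uniformly in $n$, the number of Galois conjugates of $\beta_n$ clustering $v$-adically close to $\alpha$. Producing such a no-accumulation estimate is the heart of the matter and is precisely where quantitative tools enter: effective equidistribution with explicit discrepancy estimates, or---as exploited in the Chebyshev setting of the present paper---lower bounds for $|\gamma-\alpha|_v$ coming from linear forms in logarithms at archimedean places, together with explicit Galois-theoretic counts at the non-archimedean places of $S$.
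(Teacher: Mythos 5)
The statement you are asked about is a \emph{conjecture}, not a theorem. The paper explicitly labels it as such, attributes it to Baker--Ih--Rumely, and records that it is known only in special cases (power maps and Latt\`es maps by \cite{baker2008}, Chebyshev polynomials by Ih--Tucker \cite{ih}, and rational maps at totally Fatou points by Petsche \cite{petsche2008}). The present paper does not prove the conjecture; it proves \emph{uniform} versions of the Chebyshev case (Theorems~\ref{thm1} and~\ref{thm2}). So there is no paper proof to compare your argument against, and no proof of the conjecture should be expected.

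Your sketch is nevertheless the standard attack on this problem, and you should be credited for accurately describing it: $S$-integrality forces the local integrals $\int f_{\alpha,v}\,d\mu_{n,v}$ to vanish for $v\notin S$; equidistribution of small points (Baker--Rumely, Chambert-Loir, Favre--Rivera-Letelier) pushes the sum over $v\in S$ toward the Arakelov--Zhang pairing $\langle\varphi,\alpha\rangle$; and Petsche--Szpiro--Tucker positivity ($\langle\varphi,\alpha\rangle>0$ when $\alpha$ is non-preperiodic) yields the contradiction. You also correctly put your finger on the gap: the test functions $f_{\alpha,v}$ have a logarithmic pole at $\alpha$, so weak convergence does \emph{not} give convergence of the integrals without an additional no-accumulation (or ``logarithmic equidistribution'') estimate controlling how many Galois conjugates of $\beta_n$ can cluster $v$-adically near $\alpha$.

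That gap is genuine and is precisely the reason the conjecture is open in general. Your proposed remedy --- ``linear forms in logarithms at archimedean places, together with explicit Galois-theoretic counts at the non-archimedean places'' --- is special to the Chebyshev (and power/Latt\`es) setting. It works there only because preperiodic points have an explicit parametrization: for Chebyshev maps they are exactly the values $\zeta+\zeta^{-1}$ with $\zeta$ a root of unity (Proposition~\ref{prop2}), so $|\sigma(x)-\beta|_v$ can be lower-bounded via Baker's theorem on linear forms in logarithms (as in Corollary~\ref{linearcoro}), and the $p$-adic counts at finite places exploit the arithmetic of roots of unity (as in Proposition~\ref{prop3.2}). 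For a general degree-$d$ rational map there is no such parametrization and no known diophantine substitute, so the truncation error cannot currently be controlled. Your proposal is therefore a correct outline of a strategy and a correct diagnosis of where it breaks down, but it is not --- and cannot at present be made into --- a proof of Conjecture~\ref{conj1}.
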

 
Conjecture \ref{conj1} has been proven when $\varphi$ is a power map (for $n\geq 2$) map on $\mathbb{G}_m$ and Latt\`es map on elliptic curve  \cite{baker2008}. In \cite{ih}, Ih and Tucker proved that  Conjecture \ref{conj1} is true for Chebyshev polynomials. Petsche \cite{petsche2008} has proved this conjecture for any rational map $\varphi$ of degree $d\geq 2$ under the additional assumptions that $\beta$ is a totally Fatou point. Young \cite{young} studied the quantitative analogue result of Petsche's result. In \cite{yap}, Yap established uniformity result on the number of $S$-integral preperiodic points relative to a non-preperiodic point $\beta$, as $\beta$ varies over number fields of bounded degree for power or Latt\`es map. 
 
The main result of the current paper is to prove a uniform result of \cite[Theorem 1.0.2]{ih}. To state our theorem, we use the following notation. For an algebraic number $x\in \bar{\Q}$ and a number field $K$, we let $G_K(x):=\mbox{Gal}(\bar{K}/K)\cdot x$ denote the $ \mbox{Gal}(\bar{K}/K)$-orbit of $x$ and $|G_K(x)|$ denote the size of the set $G_K(x)$.

  \begin{theorem}\label{thm1}
 Let $K$ be a number field and $S$ be a finite set of places of $\mathbb{Q}$ including all the archimedean places. Suppose $\varphi:\mathbb{P}^1\to \mathbb{P}^1$ is a Chebyshev polynomial and $\beta \in K^{\times}$ is not a preperiodic point of $\varphi$, i.e., $\beta$ is not of the form $\zeta +\zeta^{-1}$ for any root of unity $\zeta$. Then there exists a constant $c=c([K:\mathbb{Q}],S)$ such that for any such $\beta$, if $\alpha$ is preperiodic and $S$-integral relative to $\beta$ then $|G_{\Q}(\alpha)| <c$.
 \end{theorem}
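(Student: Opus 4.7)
The plan is to lift the problem from the Chebyshev dynamics on $\mathbb{A}^1$ to the power-map dynamics on $\mathbb{G}_m$ via the double cover $z\mapsto z+z^{-1}$, and then to proceed in the spirit of Yap's argument for the power map. Since $\al$ is preperiodic for $\varphi$, write $\al = \zeta + \zeta^{-1}$ for some root of unity $\zeta$ of order $n$; the Galois conjugates of $\al$ over $\Q$ are the $\phi(n)/2$ numbers $\al_k = \zeta^k + \zeta^{-k}$ with $\gcd(k,n)=1$ (modulo $k\sim -k$), so it suffices to bound $n$. Let $\ga,\ga^{-1}\in L := K(\sqrt{\be^2-4})$ be the two roots of $X^2-\be X+1$; because $\be$ is not of the form $\zeta'+\zeta'^{-1}$, neither $\ga$ nor $\ga^{-1}$ is a root of unity. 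The algebraic identity
$$\al_k - \be \;=\; \zeta^{-k}(\zeta^k-\ga)(\zeta^k-\ga^{-1})$$
reduces the problem to controlling how close the roots of unity $\zeta^k$ can come to the fixed algebraic numbers $\ga,\ga^{-1}$ at each place of $L$.

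Next, I would translate the $S$-integrality hypothesis through this factorisation. Enlarging $S$ to a finite set $S'$ of places of $L$ depending only on $S$ and $[K:\Q]$ (and containing all places above the discriminant of the cover), the hypothesis forces $\log|\zeta^k-\ga|_v\leq 0$ and $\log|\zeta^k-\ga^{-1}|_v\leq 0$ at every $v\notin S'$ and every $k$ coprime to $n$, so the non-archimedean contribution outside $S'$ is nonpositive. At each archimedean $v$, Jensen's formula and the equidistribution of roots of unity on the unit circle in $L_v\cong\C$ give
$$\frac{1}{\phi(n)}\sum_{\gcd(k,n)=1}\log|\zeta^k-\ga|_v \;\longrightarrow\; \log^+|\ga|_v,$$
and an analogous limit for $\ga^{-1}$. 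Making this convergence \emph{effective} is the heart of the argument: when $|\ga|_v\neq 1$ an elementary estimate for the resultant of $X^n-1$ with $X-\ga$ suffices, but when $|\ga|_v=1$ one must appeal to Matveev's quantitative theorem on linear forms in logarithms to secure a polynomial lower bound $|\zeta^k-\ga|_v \geq c(\ga)\,n^{-A}$ with effective $A,c(\ga)$ depending only on $h(\ga)$, $[L:\Q]$, and $v$.

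Finally, applying the product formula to $\prod_k(\al_k-\be)\in K^\times$ and inserting the local estimates from the previous paragraph yields a master inequality of the form
$$\phi(n)\cdot\hat h_\varphi(\be) \;\leq\; C_1(S,[K:\Q]) + C_2([K:\Q])\log n,$$
where $\hat h_\varphi(\be)=2h(\ga)$ is the canonical height of $\be$ for the Chebyshev map. Since $\ga$ is a non-root-of-unity algebraic number of degree at most $2[K:\Q]$, Dobrowolski's theorem furnishes a lower bound $h(\ga)\geq c_0([K:\Q])>0$, and substituting this forces $\phi(n)\ll_{S,[K:\Q]}\log n$, which in turn bounds $n$, and hence $|G_\Q(\al)|=\phi(n)/2$, by a constant depending only on $[K:\Q]$ and $S$. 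The principal obstacle is the effective archimedean lower bound when $\ga$ lies on the unit circle: it is this step that requires Matveev's bound and that also produces the $\log n$ error, which is why Dobrowolski (and not merely the qualitative fact $h(\ga)>0$) is essential in the closing step.
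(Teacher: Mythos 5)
Your proposal is mathematically sound but takes a genuinely different route from the paper. The paper derives Theorem \ref{thm1} as a corollary of the proof of Theorem \ref{thm2}, whose engine is Yap's quantitative logarithmic equidistribution bound (Proposition \ref{quntilogprop2}) together with the Arakelov--Zhang pairing estimate (Proposition \ref{prop3}): one shows that, once $|\mathcal{P}|$ is large, the average of $\lambda_{\alpha,v}(\beta)$ over $\mathcal{P}$ is close to $\int\lambda_{\alpha,v}(\beta)\,d\mu_{\varphi,v}$ at each $v\in S$, while vanishing off $S$ by $S$-integrality, and this forces $h_\varphi(\beta)$ to be small --- contradicted by Dobrowolski. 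You instead lift to $\mathbb{G}_m$ via $z\mapsto z+z^{-1}$, use the identity $\alpha_k-\beta=\zeta^{-k}(\zeta^k-\gamma)(\zeta^k-\gamma^{-1})$, and run the product formula on $\prod_k(\alpha_k-\beta)$ directly, with Jensen's formula giving the archimedean limit and a linear-forms bound making it effective. This is closer in spirit to the original Baker--Ih--Rumely argument for power maps (and Ih--Tucker's transfer to Chebyshev) than to Yap's adelic-metric framework; it avoids the Arakelov--Zhang formalism at the cost of a hands-on bookkeeping of local terms after the field extension $L=K(\sqrt{\beta^2-4})$.

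Two aspects of the two proofs align exactly: both rely on an effective lower bound for $|\zeta^k-\gamma|_v$ at archimedean places via Baker-type linear forms in two logarithms (the paper uses Bugeaud's Theorem 11.1, you propose Matveev; either works and produces the same $n^{\epsilon}$-type loss), and both close with Dobrowolski's theorem to convert $h_\varphi(\beta)>0$ into a quantitative lower bound of size $\gg_D 1/(D(\log D)^3)$. One point to make precise in your sketch: the assertion $\hat h_\varphi(\beta)=2h(\gamma)$ is correct (as $z+z^{-1}$ has degree $2$, $h(T_d^n\beta)=2d^n h(\gamma)+O(1)$), but note $\gamma$ lives in a degree-$\le 2$ extension of $K$, so when you invoke Dobrowolski the argument must use $[L:\mathbb{Q}]\le 2[K:\mathbb{Q}]$ rather than $[K:\mathbb{Q}]$. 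Also, your statement that the non-archimedean contribution outside $S'$ is ``nonpositive'' should be sharpened: $S$-integrality together with $\alpha$ being an algebraic integer gives the exact equality $|\sigma(\alpha)-\beta|_v=\max\{1,|\beta|_v\}$ for $v\notin S$, from which the corresponding factors $|\zeta^k-\gamma^{\pm 1}|_v$ can be controlled, but the case split on whether $\gamma$ is a $v$-adic unit has to be carried out --- this is the content the paper packages into its Corollary \ref{cor3.3} / Proposition \ref{prop3.2}. Neither of these is a fatal gap; they are the technical details that your high-level outline deliberately suppresses.
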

 
 Now from Theorem \ref{thm1}, it is evident to ask that how $c$ is depends on $[K:\Q]$. In our next result, we will provide an upper bound that grows exponentially with $[K:\Q]$. We let $S_{\mbox{fin}}$ denote the subset of $S$ consisting of exactly all non-archimedean places.
 
 \begin{theorem}\label{thm2}
  	Let $K$ a number field and $S$ be a finite set of places of $\mathbb{Q}$ including all the archimedean places. Suppose $\varphi:\mathbb{P}^1\to \mathbb{P}^1$ is a Chebyshev polynomial and $\beta \in K^{\times}$ is not of the form $\zeta +\zeta^{-1}$ for any root of unity $\zeta$. Then there exists a constant $c>0$ such that for all such $\beta$, the set 
	$$\{\alpha \in \mbox{PrePer}(\varphi, \bar{K}): |G_{\Q}(\alpha)|  >c  [K:\Q]^{12}, \alpha\;\text{is $S$-integral relative to $\beta$}\}$$ is a union of at most $|S_{fin}|\;\; \mbox{Gal}(\bar{\Q}/\Q)$-orbits.
  \end{theorem}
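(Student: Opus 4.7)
The proof will transfer the problem to the multiplicative group via the Chebyshev semi-conjugation and then run a quantitative equidistribution argument, in the spirit of Yap's treatment \cite{yap} of the power map case and the ineffective Chebyshev argument of Ih--Tucker \cite{ih}.

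Write $\beta = \eta + \eta^{-1}$ with $\eta \in K' := K(\sqrt{\beta^2-4})$; by hypothesis $\eta$ is not a root of unity. Let $S'$ denote the set of places of $K'$ above $S$, so $|S'_{\text{fin}}| \le 2|S_{\text{fin}}|$ and $[K':\Q] \le 2[K:\Q]$. Any preperiodic $\alpha$ of $\varphi$ takes the form $\alpha = \zeta + \zeta^{-1}$ with $\zeta$ a root of unity, and one has the identity
\begin{equation*}
\alpha - \beta \;=\; \zeta^{-1}\eta^{-1}(\zeta - \eta)(\zeta\eta - 1).
\end{equation*}
Since $|\zeta|_v = 1$ at every non-archimedean $v$, the hypothesis that $\alpha$ is $S$-integral relative to $\beta$ translates over $K'$ into the statement that at every $v \notin S'$ both $|\zeta - \eta|_v = 1$ and $|\zeta - \eta^{-1}|_v = 1$.

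Applying the product formula to $\sigma\alpha - \beta$ and averaging over the Galois orbit of $\alpha$ (of size $N := |G_\Q(\alpha)|$, which forces the order of $\zeta$ to be of size $\gtrsim N$), the translated integrality kills the sum outside $S'$ and yields
\begin{equation*}
\frac{1}{N}\sum_{\sigma}\sum_{v \in S'}\log|\sigma\alpha - \beta|_v \;\ge\; 0.
\end{equation*}
For each fixed $v \in S'$, I would then invoke a quantitative equidistribution of the Galois conjugates of $\zeta$ toward the $v$-adic Chebyshev equilibrium measure, with the effective error furnished by a Baker--W\"ustholz lower bound on linear forms in logarithms, just as in \cite{ih}. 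Summing the resulting limiting integrals over $v \in S'$ via the Arakelov--Zhang pairing collapses to $-\hat h_\varphi(\beta)$, so one obtains an upper bound of the form
\begin{equation*}
\hat h_\varphi(\beta) \;\le\; C\bigl([K:\Q],\, |S_{\text{fin}}|\bigr)\, N^{-\kappa},
\end{equation*}
valid \emph{outside} at most $|S_{\text{fin}}|$ exceptional Galois orbits of $\alpha$. The exceptional orbits arise from finite places $v \in S_{\text{fin}}$ at which a positive proportion of the conjugates of $\zeta$ cluster $v$-adically near $\eta^{\pm 1}$; a pigeonhole argument shows that each such $v$ can absorb at most one full Galois orbit of $\alpha$.

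Combining with a Dobrowolski-type lower bound $\hat h_\varphi(\beta) \gg [K:\Q]^{-c_0}$ (valid because $\eta$ is not a root of unity) and inverting then gives $N \le c[K:\Q]^{12}$ for every non-exceptional orbit, with the final exponent $12$ arising from the combined dependence in Baker--W\"ustholz, Dobrowolski, and the degree-doubling lift $K \to K'$. The main technical obstacle is keeping every error polynomial in $[K:\Q]$ with a small enough exponent so that the combination matches $12$; this requires applying Baker--W\"ustholz uniformly at all $v \in S'$ while carefully tracking local degrees, and verifying that the enlargement $S \to S'$ contributes no additional exceptional orbits beyond those already accounted for by $|S_{\text{fin}}|$.
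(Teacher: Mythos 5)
Your route coincides with the paper's: archimedean lower bounds on $|\sigma\alpha-\beta|_v$ from linear forms in two logarithms (Corollary~\ref{linearcoro}, feeding into Proposition~\ref{prop3.1}), Yap's quantitative logarithmic equidistribution (Proposition~\ref{quntilogprop2}), the Arakelov--Zhang rate estimate (Proposition~\ref{prop3}), and Dobrowolski's lower bound, with at most one exceptional Galois orbit per finite place of $S$ (Corollary~\ref{cor3.3}) accounting for the $|S_{fin}|$ exceptions. The explicit lift to $\mathbb{G}_m$ over $K'$ and the factorization $\alpha-\beta = \zeta^{-1}\eta^{-1}(\zeta-\eta)(\zeta\eta-1)$ are a fine repackaging of the same computation; the paper instead uses the parametrization $\beta=2\cos(2\pi\theta_0)$ only locally at archimedean places and keeps the global bookkeeping on $\mathbb{P}^1$. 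Two flags on the details. First, the displayed inequality $\frac{1}{N}\sum_{\sigma}\sum_{v\in S'}\log|\sigma\alpha-\beta|_v\ge 0$ has the wrong sign: since $|\sigma\alpha|_v\le 1$ at each finite $v$, $S$-integrality forces $\log|\sigma\alpha-\beta|_v=\log^{+}|\beta|_v\ge 0$ off $S'$, so the product formula yields $\sum_{v\in S'}\log|\sigma\alpha-\beta|_v\le 0$; the quantity one actually wants to sum is the nonnegative local height $\lambda_{\sigma\alpha,v}(\beta)$, which vanishes off $S'$ and packages into $h_{\bar{L}_\beta}(\mathcal{P})$, exactly as in the paper's Arakelov--Zhang set-up. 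Second, the linear-forms-in-logarithms input bears only on archimedean $v$; at non-archimedean $v\in S_{fin}$ the needed distance lower bound is the elementary $p$-adic estimate on $|1-\xi|_v$ for roots of unity $\xi$ (Proposition~\ref{prop3.2}), which is also precisely the source of the one exceptional orbit per finite place.
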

For example, if $S = \{2, 3, \infty\}$, then for each $\beta$ with $[\Q(\beta):\Q]\leq D$, except for two possible 
$\mbox{Gal}(\bar{\Q}/\Q)$-orbits as exceptions, all other preperiodic points $\alpha$ that are $S$-integral relative to $\beta$ must satisfy $|G_{\Q}(\alpha)| < cD^{12}$ conjugates, where $c>0$ is a constant independent of $D$ and $\beta$.

\subsection*{Outline of the paper:} At first, we briefly describe Chebyshev dynamical systems and the canonical measure for this system. We give a short introduction related to the notion of height of algebraic numbers and $S$-integrality.  Our proof of main results relies on the (quantitative) equidistribution theorem for dynamical small points on $\mathbb{P}^1$, which is independently proved due to Baker-Rumely \cite{baker2006},  Chambert-Loir \cite{Chambert}, and Favre-Rivera-Litelier \cite{favre}. This result states that if $\{x_n\}$ is a sequence of distinct points in $\mathbb{P}^1(\bar{K})$ such that $h_{\varphi}(x_n)\to 0$, then the sets of $\mbox{Gal}(\bar{K}/K)$-conjugates of the terms $x_n$ equidistribute with respect to the canonical measure $\mu_{\varphi, v}$ on each local analytic space the Berkovich projective line $\mathbb{P}_{\mbox{Berk}, v}^1$ (for details about Berkovich analytification \cite{baker2010, berko}). More precisely, if $\mathcal{P}_n$ is the Galois orbits of $x_n$ for any continuous function $f$ on $\mathbb{P}_{\mbox{Berk}, v}^1$, we have 
\begin{equation}\label{eqdist}
\lim_{n\to \infty} \frac{1}{|\mathcal{P}_n|} \sum_{x\in \mathcal{P}_n} f(x) \to \int fd\mu_{\varphi, v}.
\end{equation}
Let $\beta\in \mathbb{P}^1(K)$. If we have the convergence in \eqref{eqdist} for the function $f(x) = \log^{+}|x-\beta|_v^{-1}$, we say that the sequence $(\mathcal{P}_n)$ satisfies logarithmic equidistribution at $\beta$. Note that if  $(\mathcal{P}_n)$ satisfies logarithmic equidistribution at $\beta$, then only finitely many $x_n$'s can be $S$-integral relative to $\beta$. Then using a result of \cite{yap}, we obtain a bound for the rate of logarithmic equidistribution in terms of the height $h(\beta)$. In Section \ref{sec3}, we prove Theorem \ref{thm2} along with some other results. We note that the main ideas of our work are coherent with those of \cite{yap}.

 \section{Preliminaries}\label{prelim}
 \subsection{The Chebyshev dynamical systems on $\mathbb{P}^1$}\label{cheby}
 We start with a brief description of the Chebyshev dynamical systems on $\mathbb{P}^1$. For more details, (see \cite{milnor} and \cite{sil}).
 
 Define the polynomials \( T_1(z) := z \), \( T_2(z) := z^2 - 2 \), and 
  \[
 T_{n+1}(z) + T_{n-1}(z) = z T_n(z)\quad \mbox{for all}\; n\geq 2.
 \]
 Then a Chebyshev polynomial is defined to be any of the \( T_n \) for \( n \geq 2 \). These polynomials satisfy the following properties \cite{milnor}.
 \begin{enumerate}
 \item For any  $n \geq 1 , T_n(\omega + \omega^{-1}) = \omega^n + \omega^{-n}$, equivalently, $T_n(2 \cos \theta) = 2 \cos(n \theta)$, where  $\omega \in \mathbb{C}^\times$  and  $\theta \in \mathbb{R}$ .
 \item For any  $m, n \geq 1 , T_m \circ T_n = T_{mn}$.
 \item For any  $n \geq 3 ,  T_n$  has $n - 1 $ distinct critical points in the finite plane, but only two critical values, i.e., $\pm 2$.
 \end{enumerate}
\begin{definition}
 Let \( \varphi \) be a Chebyshev polynomial. The dynamical system  induced by $\varphi$ on $\mathbb{P}^1$ (or $\mathbb{A}^1$) is referred to as the (Chebyshev) dynamical system associated with $\varphi$, or the $\varphi$-dynamical system. When \( \varphi \) is clear from the context, we simply call it a Chebyshev dynamical system without explicitly mentioning \( \varphi \).
  \end{definition}
\begin{proposition}\label{prop1}
For any Chebyshev polynomial \( \varphi \), the Julia set of the dynamical system induced by \( \varphi \) (or \( -\varphi \)) is the interval \([-2, 2]\), which can be naturally identified as a subset of the real line in the complex plane.
\end{proposition}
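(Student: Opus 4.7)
The plan is to identify the Julia set by first computing the filled Julia set explicitly via the substitution $z = \omega + \omega^{-1}$, which is precisely the parametrization that makes Chebyshev polynomials tractable.

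First I would introduce the map $\Psi: \mathbb{C}^\times \to \mathbb{C}$ given by $\Psi(\omega) = \omega + \omega^{-1}$. This is a $2$-to-$1$ surjection (identifying $\omega$ with $\omega^{-1}$), and the key identity from property (1) says $T_n \circ \Psi = \Psi \circ p_n$, where $p_n(\omega) = \omega^n$. In other words, $T_n$ on $\mathbb{P}^1$ is semi-conjugate to the power map on $\mathbb{G}_m$. The unit circle $|\omega| = 1$ maps under $\Psi$ onto $[-2,2]$, while $|\omega| \neq 1$ maps to $\mathbb{C} \setminus [-2,2]$.

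Next I would use this semi-conjugacy to compute the filled Julia set $K(T_n)$. If $z = \Psi(\omega)$, then $T_n^k(z) = \Psi(\omega^{n^k})$. When $|\omega| = 1$, the entire forward orbit lies in $[-2,2]$, so $z \in K(T_n)$. When $|\omega| \neq 1$, say $|\omega| > 1$, we have $|\omega^{n^k}| \to \infty$ and hence $|T_n^k(z)| \to \infty$, so $z \notin K(T_n)$. Therefore $K(T_n) = [-2,2]$. Since $[-2,2] \subset \mathbb{C}$ has empty interior, the standard identity $J(T_n) = \partial K(T_n)$ gives $J(T_n) = [-2,2]$.

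Finally, for $-\varphi$ I would invoke the general fact $J(f) = J(f^k)$ for any $k \geq 1$. Since $(-T_n)^2 = T_n^2 = T_{n^2}$ by property (2), we get $J(-T_n) = J(T_{n^2}) = [-2,2]$ from the previous step.

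The argument is essentially classical and I do not anticipate a serious obstacle; the only careful point is justifying that $K(T_n)$ has empty interior (so that it coincides with its topological boundary and hence equals the Julia set). This is immediate here because $K(T_n) = [-2,2]$ is a real line segment in $\mathbb{C}$, but in a write-up it is worth stating explicitly to avoid confusion between the filled Julia set and the Julia set.
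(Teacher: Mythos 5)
Your argument is a genuine proof, whereas the paper disposes of this statement by citing Section~7 of Milnor; the route you take (the semiconjugacy $\Psi(\omega)=\omega+\omega^{-1}$, $T_n\circ\Psi=\Psi\circ p_n$, identifying the filled Julia set and then passing to its boundary) is exactly the classical argument that Milnor's reference contains, so in spirit you are reconstructing the cited source rather than doing something structurally new. The computation of $K(T_n)$ is correct: for $|\omega|>1$ one has $|\Psi(\omega^{n^k})|\ge |\omega|^{n^k}-|\omega|^{-n^k}\to\infty$, and for $|\omega|=1$ the orbit stays in $[-2,2]$; since $[-2,2]$ has empty interior, $J(T_n)=\partial K(T_n)=[-2,2]$.

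There is, however, a genuine gap in the final step for $-\varphi$. The identity $(-T_n)^{\circ 2}=T_{n^2}$ holds only when $n$ is \emph{odd}. Recall $T_n(-x)=(-1)^nT_n(x)$, so for $n$ even $T_n$ is an even function and
\[
(-T_n)\circ(-T_n)(z) = -T_n\bigl(-T_n(z)\bigr) = -T_n\bigl(T_n(z)\bigr) = -T_{n^2}(z),
\]
which is $-T_{n^2}$, not $T_{n^2}$; e.g.\ $(-T_2)^{\circ 2}(z)=-(z^4-4z^2+2)=-T_4(z)$. So invoking $J(f)=J(f^{\circ k})$ reduces $J(-T_n)$ to $J(-T_{n^2})$, which is circular. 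The clean fix is to note that the same semiconjugacy handles $-T_n$ directly: since $-\Psi(\omega)=\Psi(-\omega)$, one has $-T_n(\Psi(\omega))=\Psi(-\omega^n)$, i.e.\ $-T_n$ is semiconjugate via $\Psi$ to $\omega\mapsto -\omega^n$, whose iterates still have moduli $|\omega|^{n^k}$. The identical escape/boundedness dichotomy then gives $K(-T_n)=[-2,2]$ and hence $J(-T_n)=[-2,2]$, with no parity casework needed.
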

\begin{proof}
See Section 7 of \cite{milnor}.
\end{proof}
 Following proposition will ensure the form of preperiodic points for Chebyshev Dynamical Systems.
\begin{proposition}\label{prop2}
Let \( \varphi \) be a Chebyshev polynomial. Then the finite preperiodic points of the \( \varphi \)-dynamical system are the elements of \( K \) of the form \( \zeta + \zeta^{-1} \), where \( \zeta \) is a root of unity.
\end{proposition}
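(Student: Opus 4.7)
The plan is to reduce Chebyshev dynamics to power-map dynamics on $\mathbb{G}_m$ via the standard semiconjugacy $\pi\colon\mathbb{G}_m\to\mathbb{A}^1$, $\pi(\zeta)=\zeta+\zeta^{-1}$. Writing $\varphi=T_n$ with $n\geq 2$, property (1) of the list just recorded gives $T_n\circ\pi=\pi\circ(\cdot)^n$, and iterating yields, for all $k\geq 1$,
\[
T_n^k(\zeta+\zeta^{-1}) = \zeta^{n^k}+\zeta^{-n^k}.
\]
Any finite $x\in\bar{K}$ can be written as $x=\zeta+\zeta^{-1}$ with $\zeta\in\bar{K}^{\times}$, namely a root of the reciprocal quadratic $z^2-xz+1$, whose two roots are nonzero and mutually reciprocal. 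Hence the proposition reduces to the equivalence: $x$ is preperiodic under $T_n$ if and only if such a $\zeta$ is a root of unity.

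The ``if'' direction is immediate: if $\zeta^{N}=1$, then $\{\zeta^{n^k}\}_{k\geq 1}$ lies in the finite group of $N$th roots of unity, so by the pigeonhole principle there exist distinct $m,k\geq 1$ with $\zeta^{n^m}=\zeta^{n^k}$, giving $T_n^m(x)=T_n^k(x)$ and thus preperiodicity of $x$.

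For the converse, suppose $T_n^m(x)=T_n^k(x)$ for some $m>k\geq 1$. Setting $u:=\zeta^{n^k}$ and $w:=\zeta^{n^m}$, the identity becomes $w+w^{-1}=u+u^{-1}$. Multiplying through by $uw$ and rearranging yields the factorization $(w-u)(wu-1)=0$. Therefore either $\zeta^{n^m-n^k}=1$ or $\zeta^{n^m+n^k}=1$, and since $n\geq 2$ and $m>k\geq 1$ the exponents $n^m\pm n^k$ are both strictly positive, so $\zeta$ satisfies a nontrivial multiplicative relation and is a root of unity.

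No substantial obstacle is anticipated: once the semiconjugacy is in place, the whole argument amounts to a one-line algebraic manipulation. The only minor bookkeeping is to note that $\zeta\neq 0$ (built into the reciprocal quadratic, so $u,w\in\bar{K}^{\times}$ and the clearing of denominators is justified) and that $n^m\pm n^k\neq 0$ (immediate from $n\geq 2$ and $m>k$). No appeal to the description of the Julia set in Proposition~\ref{prop1} is needed.
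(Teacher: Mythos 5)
Your proof is correct. The paper itself gives no argument for this proposition, deferring to Proposition 2.2.2 of \cite{ih}; the argument there likewise rests on the semiconjugacy $T_n(\zeta+\zeta^{-1})=\zeta^n+\zeta^{-n}$, so your route is essentially the standard one. The one thing you do a bit more explicitly than is typical is to handle the converse by direct factorization of $w+w^{-1}=u+u^{-1}$ into $(w-u)(wu-1)=0$, rather than first invoking the classification of preperiodic points of the power map $z\mapsto z^n$ on $\mathbb{G}_m$ as exactly the roots of unity and then transporting it down through $\pi$. Both are fine; the factorization keeps the argument entirely self-contained, while the power-map reduction makes the parallel with the $\mathbb{G}_m$ case (used elsewhere for Latt\`es and power maps in this circle of ideas) more transparent. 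Minor remark: the statement in the paper writes ``elements of $K$,'' but from context (and from your correct setup) it should read $\bar K$, since $\zeta+\zeta^{-1}$ for an arbitrary root of unity need not lie in $K$.
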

 \begin{proof}
See Proposition 2.2.2 of \cite{ih}.
\end{proof}
To any dynamical system on $\mathbb{P}^1$ over a number field $K$ are attached some invariant probability measures denoted by $\mu_v$, called the canonical measures ($v$ running over the set of inequivalent primes, finite or infinite, of $K$) on $\mathbb{P}^1(\C_v)$. Let $\varphi$ be a Chebyshev polynomial. The $\varphi$-dynamical system has good reduction everywhere and that the measure $\frac{1}{\pi} \frac{1}{\sqrt{4-x^2}}dx$ on $[-2, 2]$ is a probability measure invariant under $\varphi$ by the recursion formula defining the Chebyshev polynomials, where $dx$ is the usual Lebesgue measure supported on $[-2, 2]$, i.e., if $v\mid \infty$, the canonical measure for the Chebyshev dynamical system is equal to
\begin{equation}
d\mu_v = \frac{1}{\pi} \frac{1}{\sqrt{4-x^2}}dx.
\end{equation}

 \subsection{Heights}\label{height}
  Let $K$ be an algebraic number field  and $\mathcal{O}_{K}$ be its ring of integers. Let $M_{K}$ be the set of places (normalized inequivalent absolute values) of $K$ and we write $K_v$ for the completion of $K$ at the place $v$. For $v\in M_{K}$, if $v$ is an infinite place, then 
\begin{equation}\label{7abc}
|x|_v := |x|^{[K_v:\mathbb{R}]/[K:\mathbb{Q}]}  \quad \hbox{for} \ x \in \mathbb{Q},
\end{equation}
where as if $v$ is a finite place lying above the prime $p$, then 
\begin{equation}\label{7abcd}
|x|_v := |x|_p^{[K_v:\mathbb{Q}_{p}]/[K:\mathbb{Q}]}  \quad \hbox{for} \ x \in \mathbb{Q}.
\end{equation}
For $p \in M_{\mathbb{Q}}$, we choose a normalized absolute value $|\cdot |_p$ in the following way. If $p = \infty$, then $|\cdot |_p$ is the ordinary absolute value on $\mathbb{Q}$, and if $p$ is prime, then the absolute value is the $p$-adic absolute value on $\mathbb{Q}$, with $|p|_p= 1/p$. In either case, we have 
\begin{equation}\label{eq8}
|x|_v:= |N_{K_v/\mathbb{Q}_p}(x)|_p^{1/[K:\mathbb{Q}]},  
\end{equation} 
for $x \in K$ and $v\mid p$. These absolute values satisfy the product formula $\prod_{v\in M_K}|x|_v =1$, for any non zero $x \in K$. Then, more generally, given a finite extension $L/K$, we have
\[\prod_{v\in M_K}\prod_{\sigma}|\sigma(x)|_v =1\] for all $x\in L$, where the inner product is taken over all $[L:K]\; K$-embeddings $\sigma: L\to \C_v$. 

For $x= (x_1:x_2)\in \mathbb{P}^1(\bar{K})$, where $x_1, x_2\in L$, we define the {\em absolute logarithmic height} as 
\begin{equation}\label{eq9}
h(x) = \sum_{v\in M_K}N_v \log \max \{|x_1|_v, |x_2|_v\} =\sum_{v\in M_L} \log \max \{|x_1|_v, |x_2|_v\},
\end{equation}
where $N_v = \frac{[K_v:\Q_v]}{[K:\Q]}$. Note that this definition is independent of the choice of the field $L$ containing $x_1$ and $x_2$, and by the product formula, it is also independent of the choice of projective coordinates for $x$. Furthermore, if $x = x_1/x_2\in \Q$ with $\gcd(x_1, x_2)=1$, then $h(x) = \log \max\{|x_1|, |x_2|\}$ and can be used to bound the maximum number of digits needed to write $x$. Therefore, one may think of the height as measuring the ``arithmetic complexity" of an algebraic number. 

Let $\varphi: \mathbb{P}^1\to \mathbb{P}^1$ be a rational map of degree $d\geq 2$ defined over $K$. For an integer $n\geq 1$, we denote by $\varphi^n$ to mean the $n$-time composition of $\varphi$ with itself. The Call-Silverman canonical height function $h_{\varphi}: \mathbb{P}^1(\bar{K}) \to \mathbb{R}$ relative to $\varphi$ is defined by
\begin{equation}\label{eqcsheight}
h_{\varphi}(x) := \lim_{n\to \infty} \frac{h(\varphi^n(x))}{d^n}
\end{equation}
for all $x\in \mathbb{P}^1(\bar{K})$. It is shown that the limit in \eqref{eqcsheight} exists (\cite{CS}). The canonical height is uniquely characterised by the following properties: for all  $x\in \mathbb{P}^1(\bar{K})$\begin{align*}
&h_{\varphi}\left(\varphi(x)\right) = d h_{\varphi}(x)\quad \mbox{and}\quad |h(x) - h_{\varphi}(x)|< C_{\varphi}
\end{align*}
where $C_{\varphi}$ is an absolute constant. Another basic property of the canonical height is that $h_{\varphi}(x)=0$ for all $x\in \mathbb{P}^1(\bar{K})$ if and only if $x$ is preperiodic (see \cite[Theorem 3.22]{sil}).

\subsection{$S$-integrality}\label{integrality}
Let $S$ be a finite set of places of $K$ containing all the archimedean places, and define the $v$-adic chordal metric on $\mathbb{P}^{1}(\C)$ as 
  \begin{equation}
  \delta_v(x, y) = \frac{|x_1y_2-y_1x_2|_v}{\max\{|x_1|_v, |x_2|_v\}\max\{|y_1|_v, |y_2|_v\}} 
  \end{equation}
where $x=(x_1:x_2)$ and $y= (y_1:y_2)$. Since $0\leq \delta_v(\cdot, \cdot)\leq 1$, we can view $\mathcal{O}_{K,S}$ as the set of points $\gamma\in K$ whose $v$-adic chordal distance to $\infty$ is maximal for all $v\not \in S$; that is, $|\gamma|_v\leq 1$ if and only if $\delta_v(x, \infty) =1$, where $x= (\gamma:1)$.

We set $\lambda_{x, v}(y) = -\log \delta_v(x, y)$ and let $\alpha, \beta \in \mathbb{P}^1(\bar{K})$. Then we say that $\beta$ is $S$-integral relative to $\alpha$ if and only if $\lambda_{x, v}(y) = 0$ for all $v\not \in S$ and for all $x$ and $y$ is varying over the respective $K$-embeddings of $\alpha$ and $\beta$ in $\mathbb{P}^1(\C_v)$. Note that this definition is symmetric. More specifically, since $\delta_v(x, y) = \delta_v(y, x)$, we have that $x$ is $S$-integral relative to $y$ if and only if $y$ is $S$-integral relative to $x$. 

By identifying $x\in \bar{K}$ with $(x:1)\in \mathbb{P}^1(\bar{K})$, we say that $y$ is $S$-integral relative to $x\in \bar{K}$ if and only if, for all $v\not \in S$ and $\sigma, \tau \in \mbox{Gal}(\bar{K}/K)$, 
\begin{align*}
|\sigma(y)-\tau(x)|_v\geq 1,&\quad  \mbox{if}\;\quad |\tau(x)|_v\leq 1,\\
|\sigma(y)|_v\leq 1, &\quad  \mbox{if}\;\quad |\tau(x)|_v>1.
\end{align*}

For two rational maps $\varphi$ and $\psi$ define on $\mathbb{P}^1$ over $K$ each of degree at least two, we denote $<\varphi, \psi>$ the Arakelov-Zhang pairing, which is very closely related to the canonical height functions $h_{\varphi}$ and $h_{\psi}$. In fact, $<\varphi, \psi>$ is defined as a sum of local terms of the form $-\int \lambda_{\varphi, v}\Delta \lambda_{\psi, v}$, where $\lambda_{\varphi, v}$ and  $\lambda_{\psi, v}$ are canonical local height associated to $\varphi$ and $\psi$, where $\Delta$ is $v$adic Laplacian operator. It follows from equidistribution that $\lim_{n\to \infty} h_{\psi}(x_n) = <\psi, \varphi>$, for any infinite sequence of distinct points $\{x_n\}\in \mathbb{P}^1(\bar{K})$ such that $h_{\varphi} (x_n)\to 0$ (see \cite{petsche2012}). For more details about  Arakelov-Zhang pairing (see \cite{ petsche2012, zhang95}). 

For a fixed rational map, we need a bound on the rate of convergence of the Arakelov-Zhang pairing. To state this result, we need the following notations. Let $K$ be a number field, $M_K$ the set of places of $K$ and $\C_v$ the completion of $\bar{\Q}_v$. Given a line bundle $L$ on the projective line $\mathbb{P}^1$ over $K$, an adelic metric $||\cdot||$ on $L$ is a family $||\cdot|| = (||\cdot||_v)$, indexed by the places $v\in M_K$, where each $||\cdot||_v$ is a metric on $L$ over $\C_v$.  The standard height can be described in terms of the standard metric $||\cdot||_{st} = (||\cdot||_{st,v})$ on the line bundle $\mathcal{O}(1)$ over $K$. Given a section $s\in \Gamma (\mathbb{P}^1, \mathcal{O}(1))$ defined over $K$ and a point $x\in \mathbb{P}^1(K)\setminus \{\mbox{div}(s)\}$, we have
\begin{equation}\label{eqheight}
h_{L}(x) = \sum_{v\in M_K}N_v \log ||s(x)||_{st,v}^{-1}
\end{equation}
where $N_v = \frac{[K_v:\Q_v]}{[K:\Q]}$. The canonical height in terms of the canonical adelic metric is defined by	 relative to $\varphi: \mathbb{P}^1\to \mathbb{P}^1$:
\begin{equation}\label{eqheight1}
h_{\varphi}(x) = \sum_{v\in M_K}N_v \log ||s(x)||_{\varphi,\epsilon,v}^{-1}
\end{equation}
for all $x\in \mathbb{P}^1(K)\setminus \{\mbox{div}(s)\}$. Let $\beta\in K$ and consider the section $s(x)= x_0-\beta x_1$ of $\mathcal{O}(1)$. Then we set $\bar{L}_{\beta}$ be the adelic line bundle where for a place $v\in M_K$, we have the metric $\log ||s(x)||_{v, \beta}^{-1} = \lambda_{x, v}(\beta)$. 

For a fixed rational map  $\varphi$ and $\beta\in \mathbb{\bar{K}}$ arbitrary, the following proposition will give the bound on the rate of convergence of the Arkelov-Zhang pairing.

\begin{proposition}\label{prop3}
Let $\varphi$ be a rational map defined on $\mathbb{P}^1(K)$ and let $\bar{L}_{\varphi}$ be a canonical adelization for $\mathcal{O}(1)$. There exists $C_{AZ, \varphi}>0$ depending only on $\varphi$ such that for all $\beta\in \bar{K}$, 
\[|h_{\bar{L}_{\beta}}(\mathcal{P}) - <\bar{L}_{\beta}, \bar{L}_{\varphi}>| \leq h_{\varphi}(\mathcal{P}) +C_{AZ,\varphi}\left(h_{\varphi}(\mathcal{P})^{1/2}+\frac{1+\log |\mathcal{P}|^{1/2}}{|\mathcal{P}|^{1/2}}\right).\]
\begin{proof}
See Proposition 3.6 of \cite{yap}.
\end{proof}

\end{proposition}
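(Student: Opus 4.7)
To prove Proposition \ref{prop3}, the plan is to decompose the difference $h_{\bar L_\beta}(\mathcal{P}) - \langle \bar L_\beta, \bar L_\varphi\rangle$ into local contributions and apply quantitative Berkovich equidistribution place by place. Starting from the local description $h_{\bar L_\beta}(y) = \sum_{v\in M_K} N_v \lambda_{y,v}(\beta)$ and the symmetry $\lambda_{y,v}(\beta)=\lambda_{\beta,v}(y)$ of the chordal metric, combined with the canonical-metric identity
\[
\langle \bar L_\beta, \bar L_\varphi\rangle = \sum_{v\in M_K} N_v \int_{\mathbb{P}^1_{\mbox{Berk},v}} \lambda_{\beta,v}\, d\mu_{\varphi,v},
\]
which holds because $\bar L_\varphi$ is the canonical adelization of $\varphi$, the difference becomes
\[
h_{\bar L_\beta}(\mathcal{P}) - \langle \bar L_\beta, \bar L_\varphi\rangle = \sum_{v\in M_K} N_v\, D_v(\mathcal{P};\beta),\qquad D_v(\mathcal{P};\beta):=\frac{1}{|\mathcal{P}|}\sum_{y\in\mathcal{P}} \lambda_{\beta,v}(y) - \int \lambda_{\beta,v}\, d\mu_{\varphi,v}.
\]
Only finitely many places (the archimedean ones and those of bad reduction for $\varphi$) will contribute nontrivially to this sum, so the constant $C_{AZ,\varphi}$ that is eventually produced can indeed be taken depending only on $\varphi$.

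The core step is to bound each $D_v(\mathcal{P};\beta)$ using the quantitative Berkovich equidistribution theorem of Favre and Rivera-Letelier. For a smooth test function $f$ with controlled Dirichlet energy, that theorem yields a discrepancy of size $O\bigl(h_\varphi(\mathcal{P})^{1/2}\bigr) + O\bigl((1+\log|\mathcal{P}|^{1/2})/|\mathcal{P}|^{1/2}\bigr)$, matching exactly the combinatorial part of the stated bound. The function $\lambda_{\beta,v}(y) = -\log \delta_v(y,\beta)$ is however singular at $y=\beta$, so it cannot be fed directly into the theorem. The natural workaround is to truncate: replace $\lambda_{\beta,v}$ by $\min(\lambda_{\beta,v}, T)$ for a parameter $T>0$, apply the quantitative estimate to the truncation (whose Dirichlet energy grows only mildly with $T$), and handle the two tails separately on the integral and discrete sides.

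The integral tail $\int (\lambda_{\beta,v} - \min(\lambda_{\beta,v},T))\, d\mu_{\varphi,v}$ is easy to control, because the canonical measure $\mu_{\varphi,v}$ assigns only logarithmic mass to any neighbourhood of a point. The discrete tail $\frac{1}{|\mathcal{P}|}\sum_{y\in\mathcal{P}}(\lambda_{\beta,v}(y)-T)^{+}$, which measures how tightly conjugates of $\mathcal{P}$ cluster $v$-adically around $\beta$, is where the main obstacle lies. I would bound it by a Mahler/Silverman-type inequality comparing $\sum_{y\in\mathcal{P}}\log^{+}|y-\beta|_v^{-1}$ with the canonical height $h_\varphi(\mathcal{P})$ along the $\varphi$-orbit of $\mathcal{P}$; summed over the finitely many contributing places this produces precisely the additive $h_\varphi(\mathcal{P})$ term in the proposition. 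Optimising $T$ to balance the three error contributions then gives the stated bound, with $C_{AZ,\varphi}$ absorbing the finite sum of local equidistribution constants, all of which depend only on $\varphi$.
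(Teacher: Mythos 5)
Your plan starts from the identity
\[
\langle \bar{L}_\beta, \bar{L}_\varphi\rangle = \sum_{v\in M_K} N_v \int \lambda_{\beta,v}\, d\mu_{\varphi,v},
\]
but this is not correct: the Arakelov--Zhang pairing between $\bar{L}_\beta$ and the canonical adelization $\bar{L}_\varphi$ contains an additional $h_\varphi(\beta)$ term. Indeed, in the proof of Theorem~\ref{thm2} the paper uses precisely
\[
\frac{1}{|\mathcal{P}|}\sum_{w\in M_K}\sum_{\alpha\in\mathcal{P}}N_w\lambda_{\alpha,w}(\beta) - \sum_{w\in M_K}\int N_w\lambda_{\alpha,w}(\beta)\,d\mu_{\varphi,w} \;=\; h_{\bar{L}_\beta}(\mathcal{P}) - \langle\bar{L}_\beta,\bar{L}_\varphi\rangle + h_\varphi(\beta),
\]
which forces $\langle\bar{L}_\beta,\bar{L}_\varphi\rangle = h_\varphi(\beta) + \sum_w N_w\int\lambda_{\beta,w}\,d\mu_{\varphi,w}$. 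With this correction your decomposition becomes $h_{\bar{L}_\beta}(\mathcal{P})-\langle\bar{L}_\beta,\bar{L}_\varphi\rangle = \sum_v N_v D_v(\mathcal{P};\beta) - h_\varphi(\beta)$, and since the bound claimed in Proposition~\ref{prop3} involves only $h_\varphi(\mathcal{P})$ and $|\mathcal{P}|$ and must hold uniformly over all $\beta\in\bar{K}$, your place-by-place estimates now have to account for the cancellation between $\sum_v N_v D_v$ and the possibly large quantity $h_\varphi(\beta)$. A per-place truncation argument, and in particular a per-place Mahler/Silverman bound of the discrete tail $\frac{1}{|\mathcal{P}|}\sum_y(\lambda_{\beta,v}(y)-T)^+$ against $h_\varphi(\mathcal{P})$ alone, cannot produce this cancellation: such local sums are not controlled by $h_\varphi(\mathcal{P})$ alone but also involve $h(\beta)$, and the $h_\varphi(\beta)$-dependence only cancels after summing over \emph{all} places via the product formula.

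Two further claims need repair. First, the assertion that only the archimedean and bad-reduction places contribute to $\sum_v N_v D_v$ is false: the discrete average $\frac{1}{|\mathcal{P}|}\sum_{y\in\mathcal{P}}\lambda_{\beta,v}(y)$ is nonzero at any finite place $v$ dividing the resultant of the minimal polynomials of $\beta$ and of the points of $\mathcal{P}$, and there can be arbitrarily many such places. (The \emph{integral} term does localize to finitely many places, but not the discrete average.) Second, for a general rational map $\varphi$, the integral tail is controlled because the logarithmic potential of $\mu_{\varphi,v}$ is continuous and bounded, not because $\mu_{\varphi,v}$ ``assigns only logarithmic mass to neighbourhoods'' --- for Chebyshev at an archimedean place the density has a square-root singularity at $\pm 2$, and for a general map the regularity can be worse. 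More broadly, the truncation-plus-FRL scheme you outline is in fact the method underlying Proposition~\ref{quntilogprop2} (Yap's Lemma 5.2), which gives the \emph{per-place} logarithmic equidistribution rate; Proposition~\ref{prop3} is a different, global statement comparing $h_{\bar{L}_\beta}(\mathcal{P})$ to the intersection number $\langle\bar{L}_\beta,\bar{L}_\varphi\rangle$, and the paper (following Yap's Proposition 3.6) obtains it from the Arakelov intersection formalism and Zhang-type inequalities rather than by summing local truncated discrepancies.
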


\subsection{Linear forms in logarithms}
Here we state a version of Baker’s theorem on lower bounds for linear forms in logarithms. This result is crucial in bounding the distance between algebraic points, (see for more details about this and its application \cite{alan}).

\begin{theorem}\label{linear form logarithms} (Theorem 11.1 of \cite{Yann})
 		Let \(\alpha_1, \alpha_2\) be non-zero complex algebraic numbers and \(\log \alpha_1, \log \alpha_2\) any determinations of their logarithms. Set $D_1 = [\mathbb{Q}(\alpha_1, \alpha_2) : \mathbb{Q}]$. Let \(A_1, A_2\) be real numbers with
\[\log A_j \geq \max \left\{ h(\alpha_j), \frac{\log |\alpha_j|}{D_1}, \frac{1}{D_1} \right\}, \quad j = 1, 2.\]
 Let \(b_1, b_2\) be non-zero integers such that \(b_1 \log \alpha_1 + b_2 \log \alpha_2\) is non-zero and set \[ B = \frac{|b_1|}{D_1 \log A_2} + \frac{|b_2|}{D_1 \log A_1}.\]
 Then we have
 \[\log \left| b_1 \log \alpha_1 + b_2 \log \alpha_2 \right| \geq -21600 D_1^4 (\log A_1)(\log A_2) \left( \max \{ 10, \log B\} \right)^2.\]		
 \end{theorem}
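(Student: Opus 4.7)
The plan is a proof by contradiction using the interpolation-determinant method of Laurent--Mignotte--Nesterenko, which is the standard route to effective lower bounds for linear forms in two logarithms with explicit numerical constants of the shape stated. Write $\Lambda = b_1 \log \al_1 + b_2 \log \al_2$ and assume, for a contradiction, that
\[
\log|\Lambda| < -21600\, D_1^4 (\log A_1)(\log A_2)\bigl(\max\{10, \log B\}\bigr)^2.
\]
The strategy is to construct a square matrix whose determinant $\Delta$ admits an analytic upper bound (exploiting the assumed smallness of $\Lambda$) and an arithmetic lower bound (via Liouville's inequality), and to verify $\Delta \neq 0$ using a zero estimate. Balancing these two bounds against one another, after optimizing over auxiliary integer parameters, pins down the constant $21600$.

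For the construction, choose integer parameters $L, K, R, S$ (depending on $A_1, A_2, D_1, B$) with $LK = RS$ and with sizes calibrated so that the analytic and arithmetic contributions meet at the same order of magnitude; this calibration is where the exponent $D_1^4$ and the factors $\log A_1$, $\log A_2$, $(\log B)^2$ ultimately emerge. Form the interpolation determinant
\[
\Delta = \det\Bigl( \al_1^{(\la_1 + \la_2 b_1) r_i}\,\al_2^{\la_2 s_i}\Bigr),
\]
with rows indexed by pairs $(\la_1, \la_2) \in \{0, \dots, K-1\}\times \{0, \dots, L-1\}$ and columns indexed by $(r_i, s_i)$ running over a set of integer points of matching cardinality. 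Viewing the $i$-th column as the value at $z=1$ of entire functions $z \mapsto \exp\bigl(z(\la_1 + \la_2 b_1) r_i \log\al_1 + z\la_2 s_i \log\al_2\bigr)$, one substitutes $\log\al_1 = -(b_2/b_1)\log\al_2 + \Lambda/b_1$ inside each exponent, showing that the columns differ from columns of a strictly lower-rank matrix by quantities of order $|\Lambda|$. A Schwarz-type estimate applied to the analytic extension $\Delta(z)$ (obtained by inserting $z r_i$ in place of $r_i$) then yields an upper bound of the form
\[
\log|\Delta| \le -c_1 L^2 \log R + c_2 L^2 \bigl(K R \log A_1 + K S \log A_2\bigr) + c_3 L^2 \log|\Lambda|,
\]
with explicit small absolute constants $c_1,c_2,c_3$.

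On the arithmetic side, the entries of $\Delta$ lie in $\Q(\al_1, \al_2)$, a number field of degree $D_1$, with denominators controlled in terms of $h(\al_1), h(\al_2)$ and the exponents. Liouville's inequality, applied via the product formula, then yields
\[
\log|\Delta| \ge -c_4 D_1 L^2 \bigl(K R \log A_1 + K S \log A_2\bigr)
\]
whenever $\Delta \neq 0$; the non-vanishing is the content of Laurent's two-variable zero estimate, which reduces to the $\Q$-linear independence of $\log\al_1$ and $\log\al_2$ (a consequence of $b_1, b_2 \neq 0$ together with $\Lambda \neq 0$). Comparing the upper and lower bounds on $\log|\Delta|$ and solving for $\log|\Lambda|$ produces a contradiction with the assumed inequality, and hence the claimed lower bound.

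The principal obstacle is the sharp optimization of the four parameters $L, K, R, S$: each of the three estimates above contributes its own multiplicative constant, and only a finely balanced choice produces the specific numerical value $21600$ together with the clean exponent $D_1^4$. Achieving the constant $21600$ rather than a larger one also requires Laurent's resultant-based zero estimate in place of coarser alternatives, together with a careful treatment of denominators in the Liouville step. Rather than redo this delicate optimization from scratch, I would follow the explicit calculation carried out in \cite{Yann}, concentrating the exposition on the three estimates above and treating the parameter calibration as a book-keeping exercise in elementary inequalities.
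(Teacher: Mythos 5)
The paper does not prove this statement at all: it is imported verbatim as Theorem~11.1 of \cite{Yann} and used as a black box, so there is no ``paper's own proof'' to compare against. Your sketch correctly identifies the method that actually underlies such results in the literature --- Laurent's interpolation-determinant approach, with an analytic upper bound on a determinant via a Schwarz-type estimate, an arithmetic lower bound via Liouville's inequality, non-vanishing via a two-variable zero estimate, and a delicate choice of the parameters $L,K,R,S$ to extract the numerical constant. That is a faithful high-level roadmap of what Bugeaud (following Laurent--Mignotte--Nesterenko) does.

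However, as a proof of \emph{this} statement it does not hold up, for two reasons. First, your sketch defers precisely the content of the theorem --- the explicit constant $21600$, the exponent $D_1^4$, and the specific factors $(\log A_1)(\log A_2)(\max\{10,\log B\})^2$ --- to ``the explicit calculation carried out in \cite{Yann},'' which is circular: you end up citing the very result you set out to prove. Second, several of the intermediate bounds you write down are schematic (unspecified constants $c_1,\dots,c_4$, no handling of the $\log B$ term in the analytic estimate, no account of where the $\max\{10,\log B\}$ threshold enters), so even the structure of the final inequality cannot be read off from the sketch. In the context of this paper the correct move is the one the authors make: state the theorem with the citation and prove nothing. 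If you did want a self-contained argument you would need to reproduce the full LMN optimization from \cite{Yann} (roughly a chapter's worth of estimates), not summarize it.
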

 
 \begin{coro} \label{linearcoro}
 Suppose that $\beta= e^{2 \pi i \theta_0} \in \mathbb{\bar{Q}}$ and $\theta_0\in \R$.Then for any $\epsilon > 0$, there exists a constant $C_\epsilon$ such that for some $a, N\in \Z, (N\neq 0\;\mbox{or}\pm 1)$ with $(a, N) =1$, then either $\frac{a}{N} = \theta_0$ or
 	$$ \log \left | \frac{a}{N}- \theta_0 \right | \geq -C_\epsilon [\mathbb{Q}(\beta):\mathbb{Q}]^3 h \left( \beta \right)N^\epsilon $$ 
 \end{coro}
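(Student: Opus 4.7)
\emph{Proof proposal.} The plan is to convert the quantity $|a/N - \theta_0|$ into a $\mathbb{Z}$-linear form in two complex logarithms and then invoke Theorem \ref{linear form logarithms} directly, with $\alpha_1 = -1$ and $\alpha_2 = \beta$. The key observation is that $2\pi i = 2\log(-1)$ under the principal branch, so $\theta_0 = \log\beta/(2\pi i)$ for a suitable determination of $\log\beta$.

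First I would fix determinations of the logarithms so that $\log(-1) = \pi i$ and $\log\beta = 2\pi i\theta_0$; after replacing $a$ by $a - kN$ (which does not change $(a,N)=1$) I may assume $\theta_0 \in [0,1)$ and, by the trivial case when $|a/N - \theta_0| \geq 1$, also $|a| \leq |N|$. A direct computation then gives
\[
\frac{a}{N} - \theta_0 \;=\; \frac{2a\log(-1) - N\log\beta}{2\pi i N},
\]
so that
\[
\log\left|\frac{a}{N} - \theta_0\right| \;=\; \log\bigl|\,2a\log(-1) - N\log\beta\,\bigr| \;-\; \log(2\pi|N|).
\]
Provided $a/N \neq \theta_0$, the linear form in the first term on the right is nonzero, hence Theorem \ref{linear form logarithms} applies.

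Next I would set up the parameters required by Baker's theorem. With $\alpha_1 = -1$, $\alpha_2 = \beta$, we have $D_1 = [\Q(-1,\beta):\Q] = [\Q(\beta):\Q]$. Since $h(-1) = 0$ and $|\alpha_1| = |\alpha_2| = 1$, one may take
\[
\log A_1 \;=\; \frac{1}{D_1}, \qquad \log A_2 \;=\; \max\!\left\{h(\beta),\, \frac{1}{D_1}\right\}.
\]
With $b_1 = 2a$ and $b_2 = -N$, using $|a| \leq |N|$ one obtains $B \leq 2|N|/(D_1\log A_2) + |N| = O(|N|)$, and therefore $(\max\{10,\log B\})^2 \leq C'_\epsilon\, |N|^\epsilon$ for any $\epsilon>0$. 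Plugging these into Theorem \ref{linear form logarithms} yields
\[
\log\bigl|\,2a\log(-1) - N\log\beta\,\bigr| \;\geq\; -\,21600\, D_1^{\,3}\,\max\!\left\{h(\beta),\tfrac{1}{D_1}\right\} C'_\epsilon\, |N|^\epsilon,
\]
and subtracting the harmless $\log(2\pi|N|) \ll_\epsilon |N|^\epsilon$ term gives a bound of the stated shape.

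The one delicate point, which I expect to be the main obstacle, is the factor $\max\{h(\beta),1/D_1\}$ rather than $h(\beta)$ itself appearing in the Baker bound: when $\beta$ is not a root of unity we have $h(\beta)>0$, but $h(\beta)$ can in principle be much smaller than $1/D_1$. In that small-height regime the cleanest way to recover the form $D_1^{\,3}h(\beta)N^\epsilon$ stated in the corollary is to absorb the extra $(D_1 h(\beta))^{-1}$ factor into the constant $C_\epsilon$ (or equivalently into the $N^\epsilon$ slack), which is legitimate in the downstream applications in Theorems \ref{thm1}--\ref{thm2} where $\beta$ is fixed and one lets $N$ grow; alternatively one can invoke a Dobrowolski-type lower bound $h(\beta) \gg D_1^{-1-\epsilon}$ to close the gap uniformly. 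With this adjustment the corollary follows at once.
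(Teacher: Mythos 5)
Your proof is correct and follows essentially the same route as the paper: write $a/N - \theta_0$ as a $\mathbb{Z}$-linear form in two complex logarithms and apply Theorem~\ref{linear form logarithms}. The only cosmetic difference is that you take $\alpha_1 = -1$ with the determination $\log(-1) = \pi i$, while the paper takes $\alpha_1 = 1$ with the non-principal determination $\log(1) = 2\pi i$; the two choices are interchangeable and lead to the same $D_1^3$ factor after the $\log A_1 = 1/D_1$ cancellation.

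What you add, and what the paper actually glosses over, is the constraint $\log A_2 \geq 1/D_1$ from the hypothesis of Theorem~\ref{linear form logarithms}. The paper simply declares $\log A_2 = h(\beta)$, which is not legitimate when $D_1 h(\beta) < 1$, a regime that can certainly occur. Your setting $\log A_2 = \max\{h(\beta), 1/D_1\}$ is the correct choice, and you rightly flag that this changes the final bound to $D_1^3 \max\{h(\beta),1/D_1\}$ rather than $D_1^3 h(\beta)$. Your two proposed repairs (absorb the discrepancy via a Dobrowolski-type bound, or accept a $h(\beta)+1$-type statement) are both sensible; in fact, the way the paper actually \emph{uses} the corollary in Proposition~\ref{prop3.1} is with the factor $h(\beta)+1$ in place of $h(\beta)$, so the $h(\beta)+1$ version is what is really needed downstream and your analysis shows that version does follow cleanly from Baker's theorem. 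There is one further point worth keeping in mind: in your bound $B \ll |N|$ the implied constant involves $1/(D_1\log A_2)$, so to keep the constant absolute one should use the same $\max\{h(\beta),1/D_1\}$ in the definition of $B$ (which gives $D_1\log A_2 \geq 1$), rather than $h(\beta)$ alone; the paper again overlooks this, writing $|a|/(D_1 h(\beta)) \ll |N|$ with an implied constant that silently depends on $\beta$. Your normalization avoids that defect as well.
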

 \begin{proof}
 Suppose that $\frac{a}{N} \neq \theta_0$. We fix a branch of $\log$ such that $\log \left( e^{2\pi i \theta_0} \right) = 2\pi i \theta_0$ and another branch $\log (1) = 2\pi i$. Then
 		\begin{equation}\label{eq90}
 			\left|\frac{a}{N} - \theta_0 \right| = \frac{1}{2\pi} \left| \frac{a}{N} \cdot 2\pi i - 2\pi i \theta_0 \right| = \frac{1}{2\pi |N|} \left| a \cdot \log(1) - N\log \left( e^{2\pi i \theta_0} \right) \right|.
 		\end{equation}
We will apply Theorem \ref{linear form logarithms} with $\alpha_1 = 1, \, \alpha_2 = \beta, b_1 = a, \ b_2 = -N$ and $D= [\Q(\beta):\Q]$. Since \[\log A_1 \geq \max \left( h(1), \frac{|\log \alpha_1|}{D_1}, \frac{1}{D_1}  \right)\; \mbox{and}\;\;
 \log A_2 \geq \max \left\{ h \left( \beta \right), \frac{|\log \beta|}{D_1}, \frac{1}{D_1} \right\},\]
we set \[\log A_1 = \frac{1}{D_1}, \log A_2 = h(\beta), \; B = \frac{|a|}{D_1 h \left( \beta \right)} + \frac{|N|}{D_1 \log \left( \frac{1}{D_1} \right)}<<|N|. \]
 Since $\log |N|<<|N|^{\epsilon}$, then by Theorem \ref{linear form logarithms}, we have 
 \begin{align}\label{eq91}
 \begin{split}
 \log \left| a \log(1) - N\log \left( e^{2\pi i \theta_0} \right) \right| 
& \geq -21600 D_1^4 \times \frac{1}{D_1} \times h \left( \beta \right) \max \left\{10, \log (C_1N)\right\}^2\\
& \geq -C_{\epsilon}' [\mathbb{Q}(\beta):\mathbb{Q}]^3 h \left( \beta \right)|N|^{2\epsilon}.
\end{split}
 	\end{align}	
Hence, from \eqref{eq90} and \eqref{eq91}, we get the desired inequality.
 \end{proof} 	

\subsection{Quantitative logarithmic equidistribution}
In \cite{ih}, it is shown that for the Chebyshev polynomial $\varphi$ along with any sequence of distinct preperiodic points $(x_n)_{n\ge 1}$, where $x_n=\zeta_n+\zeta_n^{-1}, \zeta_n=e^{\frac{2\pi i}{n}}$ and $\beta\in\mathbb{P}^1(K)$, we have the logarithmic equidistribution at $\beta$. If $h_\varphi(\beta)> 0$, then there are only finitely many preperiodic points that are $S$-integral relative to $\beta$ (\cite[Theorem 3.4]{yap}).  For a fixed place \( v \in M_K \), let \( \mathcal{P}\) be the Galois orbit of some preperiodic point. 

To get a uniform result, we suppose $\mathcal{P}$ is the Galois orbit of some preperiodic point and $v\in M_K$. For $|\mathcal{P}|$ large enough, we will try to find an  upper bound of the form 
\[\left|\frac{1}{\mathcal{P}} \sum_{v\in S}\sum_{x\in \mathcal{P}} N_v\lambda_{x, v} (\beta) - \sum_{v\in S}\int N_v\lambda_{x, v} (\beta)d\mu_{\varphi,v}\right|< \frac{1}{2} h(\beta).\] 
But by the Arakelov-Zhang pairing, we must have
\[\lim_{|\mathcal{P}|\to \infty} \frac{1}{\mathcal{P}} \sum_{v\in S}\sum_{x\in \mathcal{P}} N_v\lambda_{x, v} (\beta) \to h_{\varphi}(\beta) + \sum_{v\in M_K} \int N_v \lambda_{x, v} (\beta)d\mu_{\varphi,v}\] and so if $h_{\varphi}(\beta) > \frac{1}{2} h(\beta)$, which is true for $\beta$ of large height, we obtain that $\mathcal{P}$ can not be $S$-integral relative to $\beta$.

The following result is due to Yap \cite{yap} which gives a bound on the logarithmic equidistribution rate.

\begin{proposition}\label{quntilogprop2}
Let $\varphi$ be the Chebyshev map and $\mathcal{P}$ be the Galois orbit of some preperiodic point. Let $v \in M_K$ be a place of  $K$ and $\beta \in \mathbb{P}^1(K)$ be a point. Fix some $\delta$ with $0<\delta< \frac{1}{2}$ and a rational map $\varphi$ over $K$. Then there exists a constant $C = C(\varphi, \delta) > 0$ such that for any $A > 1$, if 
\[
\max_{x \in \mathcal{P}} \log \left| x - \beta \right|_v^{-1} < A(h(\beta) + 1) |\mathcal{P}|^{1/2 - \delta},
\]
then
\[
\left |\frac{1}{|\mathcal{P}|} \sum_{x \in \mathcal{P}} \lambda_{x, v}( \beta) - \int \lambda_{x, v}( \beta)  \, d\mu_{\varphi,v}\right |
\leq  \frac{C}{|\mathcal{P}|^{\delta}} \sqrt{\log |\mathcal{P}|} A \left(h(\beta) + \log^+ |\beta|_v + 1\right).
		\]
\end{proposition}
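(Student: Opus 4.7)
The plan is to reduce the equidistribution estimate for the singular test function $\lambda_{x,v}(\beta)$ to one for a Lipschitz test function by truncating the logarithmic singularity at $\beta$. Modulo the bounded chordal corrections $\log^+|x|_v$ and $\log^+|\beta|_v$ implicit in $\lambda_{x,v}$, the object to control is $\log|x-\beta|_v^{-1}$. Introducing a threshold $M > 0$ to be optimised at the end, I would decompose
\[
\log|x-\beta|_v^{-1} = f_M(x) + g_M(x), \qquad f_M(x) := \min(\log|x-\beta|_v^{-1},\, M), \quad g_M(x) := (\log|x-\beta|_v^{-1} - M)^{+},
\]
so that $f_M$ is bounded by $M$ with Lipschitz constant of order $e^M$, while $g_M$ is supported in the $v$-adic ball of radius $e^{-M}$ about $\beta$.

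For the Lipschitz piece $f_M$ I would invoke a quantitative equidistribution bound for Lipschitz test functions against $\mu_{\varphi,v}$. At an archimedean $v$, this is cleanest via the semi-conjugacy $T_n(z+z^{-1}) = z^n + z^{-n}$, which transfers the problem to the angle-doubling map on the unit circle and lets one apply an Erd\H{o}s--Tur\'an type discrepancy estimate; alternatively one may combine the Arakelov--Zhang inequality of Proposition \ref{prop3} with a localisation argument reducing the adelic pairing to the single place $v$. At a non-archimedean $v$, where the Chebyshev polynomial has good reduction, $\mu_{\varphi,v}$ is supported on the skeleton corresponding to $[-2,2]$ in $\mathbb{P}^1_{\mathrm{Berk},v}$ and the analogous Berkovich-Lipschitz estimate applies. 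Either route produces an error of order $C_1\, e^{M} \sqrt{\log|\mathcal{P}|}/\sqrt{|\mathcal{P}|}$ with $C_1 = C_1(\varphi)$.

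For the singular tail, on the measure side I would exploit the regularity of $\mu_{\varphi,v}$: its archimedean density $1/(\pi\sqrt{4-x^2})$ gives $\mu_{\varphi,v}(B_v(\beta, r)) = O(\sqrt{r})$, with even better behaviour at non-archimedean $v$, so $\int g_M\, d\mu_{\varphi,v} = O(e^{-M/2})$. On the combinatorial side, via the layer-cake identity
\[
\frac{1}{|\mathcal{P}|}\sum_{x\in\mathcal{P}} g_M(x) = \frac{1}{|\mathcal{P}|}\int_M^{T} N(t)\, dt, \quad T := \max_{x\in\mathcal{P}} \log|x-\beta|_v^{-1},\; N(t) := \#\{x\in\mathcal{P}: |x-\beta|_v < e^{-t}\},
\]
and the hypothesis $T < A(h(\beta)+1)|\mathcal{P}|^{1/2-\delta}$, the remaining task is to bound $N(t)$. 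For small $t$ this is done by applying the Lipschitz estimate of the previous step to a smooth indicator of $B_v(\beta,e^{-t})$, while for $t$ close to $T$ it is done using the Baker-type separation bound of Corollary \ref{linearcoro}, which prevents too many Chebyshev-preperiodic conjugates of the form $\zeta+\zeta^{-1}$ from lying very close to $\beta$. Choosing $M \asymp (1/2-\delta)\log|\mathcal{P}|$ then balances the two error contributions and, after reinstating the chordal $\log^+|\beta|_v + 1$ correction, yields the claimed inequality.

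The main obstacle will be the counting estimate for $N(t)$ in the regime where $t$ is not far below $T$: the trivial bound $N(t) \leq |\mathcal{P}|$ loses an entire $|\mathcal{P}|^{1/2}$ factor and would recover only a $|\mathcal{P}|^{-(1/2-\delta)}$ rate, whereas the proposition asserts the sharper $|\mathcal{P}|^{-\delta}$ rate. The correct bound, that conjugates of Chebyshev-preperiodic points do not over-accumulate near $\beta$ beyond what $\mu_{\varphi,v}$ predicts, is precisely what linear forms in logarithms together with Galois-orbit equidistribution are designed to furnish; making this uniform in $\beta$, and simultaneously carrying the truncation argument out on $\mathbb{P}^1_{\mathrm{Berk},v}$ at non-archimedean places, is where the bulk of the technical work lies.
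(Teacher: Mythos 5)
The paper does not give a proof here at all: it cites Lemma~5.2 of Yap's preprint \cite{yap}, so there is no internal argument to compare yours against. What you have written is a self-contained plan, and the right thing to do is test whether it closes.

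The general template (truncate the logarithmic singularity, compare the smooth piece by quantitative equidistribution, control the tail, balance) is the standard one, but the version you describe does not close, and the one clear numerical signal you had available points you toward the fix. The claimed bound, stripped of the $h(\beta)+\log^+|\beta|_v+1$ factor, is of size
$\frac{A(h(\beta)+1)|\mathcal{P}|^{1/2-\delta}}{\sqrt{|\mathcal{P}|}}\sqrt{\log|\mathcal{P}|}$,
i.e.\ exactly the \emph{hypothesised maximum} $T:=\max_{x\in\mathcal{P}}\log|x-\beta|_v^{-1}$ divided by $\sqrt{|\mathcal{P}|}$, up to a $\sqrt{\log}$. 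This strongly suggests truncating at level $T$ itself (so that the truncation changes nothing on the empirical side and there is no tail sum to count), and then using an equidistribution inequality in which the relevant test-function norm scales \emph{linearly in $T$} --- a total-variation/Koksma or energy-type bound where the $L^\infty$-bound (or the total variation on $[-2,2]$, which is $O(T)$) enters, not the Lipschitz constant $e^T$. Your plan instead truncates at $M\asymp(\tfrac12-\delta)\log|\mathcal{P}|$, takes the Lipschitz constant $e^M=|\mathcal{P}|^{1/2-\delta}$ in the smooth estimate, and then has to beat down the tail $\tfrac{1}{|\mathcal{P}|}\int_M^T N(t)\,dt$.

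That tail is where the argument fails. First, the ``trivial bound'' $N(t)\le|\mathcal{P}|$ does not give a $|\mathcal{P}|^{-(1/2-\delta)}$ rate as you state: it gives $T-M\asymp A(h(\beta)+1)|\mathcal{P}|^{1/2-\delta}$, which \emph{grows} with $|\mathcal{P}|$ and is useless. Second, the Lipschitz estimate you propose for $N(t)$ at intermediate $t$ has additive error $\asymp e^{t}\sqrt{\log|\mathcal{P}|/|\mathcal{P}|}$; integrating this from $M$ up to $\tfrac12\log|\mathcal{P}|$ already gives $\asymp\sqrt{\log|\mathcal{P}|}$, which is larger than $1$, so nothing is gained over the $f_M$ term. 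Even if one grants the heuristic $N(t)/|\mathcal{P}|\approx\mu_{\varphi,v}(B(\beta,e^{-t}))$, the tail comes out to $|\mathcal{P}|^{-(1/2-\delta)}$ (interior $\beta$) or $|\mathcal{P}|^{-(1/4-\delta/2)}$ (boundary $\beta$), which dominates the target $|\mathcal{P}|^{-\delta}$ once $\delta>1/4$ (resp.\ $\delta>1/6$), so the claimed range $0<\delta<1/2$ is not covered.

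Third, the appeal to Corollary~\ref{linearcoro} inside this proof is misplaced. In the paper's architecture the linear-forms-in-logarithms separation result is used in Proposition~\ref{prop3.1} to \emph{establish} the hypothesis $T<A(h(\beta)+1)|\mathcal{P}|^{1/2-\delta}$; within Proposition~\ref{quntilogprop2} that bound on $T$ is simply assumed. Re-invoking Corollary~\ref{linearcoro} to control $N(t)$ for $t$ near $T$ gives no new information --- it only says $N(t)=0$ for $t>T$, which is tautological. A lower bound on the \emph{minimum} distance does not control the \emph{count} of conjugates in an intermediate ball; for that one needs equidistribution of $\{a/N:\gcd(a,N)=1\}$ at the scale in question, not Baker's theorem.

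So: right toolbox, wrong truncation level and the wrong tool for the smooth piece. Replace the Lipschitz estimate by a bounded-variation (or mutual-energy) estimate scaling linearly in $T$, truncate at $T$, and the tail on the sum side disappears entirely while the tail on the measure side is $O(e^{-T/2})$, negligible; then no counting estimate for $N(t)$ is needed at all and the argument closes uniformly in $\delta$.
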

\begin{proof}
See Lemma 5.2 of \cite{yap}.
\end{proof}

\section{Proof of Theorem \ref{thm2}}\label{sec3}
Before proving Theorem \ref{thm2}, we need to prove the existence of a constant A in terms a power of the degree $[K(\beta):K]$ for the inequality 
\[\max_{x\in \mathcal{P}} \log|x-\beta|_v^{-1} < A(h(\beta)+1) |\mathcal{P}|^{\frac{1}{2}-\delta}.\]

The following proposition will  provide a bound on the $v$-adic logarithmic distance between a non-preperiodic point $\beta$ and a preperiodic point $x$ in terms of the height of $\beta$ and the size of the  Galois orbit $\mathcal{P}$. Such  bounds  are crucial for controlling the rate of equidistribution in dynamical systems and understanding the distribution of S-integral  points.

\begin{proposition}\label{prop3.1}
Let $\varphi$ be a Chebyshev polynomial and $\mathcal{P}$ be any $\mbox{Gal}(\bar{\mathbb{Q}}/\mathbb{Q})$-orbit of some preperiodic point. Let $K$ be a number field and $\beta\in \mathbb{P}^1(K)$ be a non-preperiodic point and let $v$ be an archimedean place of $K$. For any $\epsilon > 0$, there exists a constant $C_\epsilon$ such that  
	$$ \max_{x\in \mathcal{P}} \log |x-\beta|_v^{-1} < C_\epsilon[K:\mathbb{Q}]^3(h(\beta)+1)|\mathcal{P}|^\epsilon.$$ 
\end{proposition}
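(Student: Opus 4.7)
The strategy is to factor $x - \beta$ using the Chebyshev parametrization and apply the linear-forms-in-logarithms estimate of Corollary~\ref{linearcoro}. By Proposition~\ref{prop2}, every $x \in \mathcal{P}$ has the form $x = \zeta + \zeta^{-1}$ with $\zeta = e^{2\pi i a/N}$ a primitive $N$-th root of unity and $\gcd(a,N)=1$. In parallel, solve the quadratic $X^2 - \beta X + 1 = 0$ to write $\beta = \omega + \omega^{-1}$ with $\omega \in \bar{K}$; then $[\Q(\omega):\Q] \leq 2[K:\Q]$ and $h(\omega) \leq h(\beta) + O(1)$ by the standard height inequality for roots of a quadratic. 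A direct computation yields the identity
\[
x - \beta \;=\; \frac{(\zeta - \omega)(\zeta - \omega^{-1})}{\zeta},
\]
and since $|\zeta|_v = 1$ at the archimedean place $v$, it suffices to lower-bound $|\zeta - \omega|_v$ and $|\zeta - \omega^{-1}|_v$ separately.

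If $|\omega|_v \neq 1$ (equivalently $\beta \notin [-2,2]$ viewed through $v$), both factors are at least $\bigl|\,|\omega|_v - 1\,\bigr|$, and a Liouville-type bound via the product formula applied to $\omega$ (or to $\beta^2 - 4$) gives the stronger estimate $\log|x-\beta|_v^{-1} \leq C[K:\Q](h(\beta)+1)$, which is more than we need. The main case is $|\omega|_v = 1$, so write $\omega = e^{2\pi i \theta_0}$ with $\theta_0 \in \R$ necessarily irrational, because $\beta$ is non-preperiodic. Then
\[
|x - \beta|_v \;=\; 4\,\bigl|\sin(\pi(a/N - \theta_0))\bigr|\cdot \bigl|\sin(\pi(a/N + \theta_0))\bigr|.
\]
Using $|\sin \pi r| \geq 2\,\mathrm{dist}(r,\Z)$ and then applying Corollary~\ref{linearcoro} to $\omega$ and separately to $\omega^{-1} = e^{-2\pi i \theta_0}$ (with $h(\omega^{-1}) = h(\omega)$), where the ``$a/N$'' in the corollary is taken to be the reduction of $a/N \pm \theta_0$ modulo the nearest integer, each sine factor is bounded below by $\exp(-C_\epsilon [\Q(\omega):\Q]^3 h(\omega)\, N^{\epsilon})$.

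The final step is to convert $N$ into $|\mathcal{P}|$. Since $x = 2\cos(2\pi a/N)$ generates the maximal totally real subfield of $\Q(\zeta_N)$, one has $|\mathcal{P}| = \phi(N)/2$ for $N \geq 3$, and the elementary estimate $\phi(N) \gg_\delta N^{1-\delta}$ gives $N \leq C_\delta\,|\mathcal{P}|^{1+\delta}$. Multiplying the two sine lower bounds, taking $-\log$, substituting $[\Q(\omega):\Q] \leq 2[K:\Q]$ and $h(\omega) \ll h(\beta) + 1$, and finally choosing $\epsilon$ and $\delta$ small enough yields the claimed bound. The main obstacle is not any single estimate but the combined bookkeeping: one must apply Corollary~\ref{linearcoro} uniformly across all $a$ coprime to $N$ (via a nearest-integer shift, which preserves coprimality with $N$) and across both $\omega$ and $\omega^{-1}$, while ensuring that the degree blow-ups in passing from $\beta$ to $\omega$ and to the cyclotomic field containing $\zeta$ are absorbed into the exponent $[K:\Q]^3$ without inflation.
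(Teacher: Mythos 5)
Your proof follows the paper's overall strategy: split according to whether $\beta$ lies in $[-2,2]$ at $v$ (equivalently $|\omega|_v=1$), use the parametrizations $x=\zeta+\zeta^{-1}$ and $\beta=\omega+\omega^{-1}$, apply the Baker-type lower bound of Corollary~\ref{linearcoro}, and convert $N$ to $|\mathcal{P}|$ via the growth of $\phi(N)$. Where you differ, and in a way that matters, is in the archimedean estimate at the heart of the $\beta\in[-2,2]$ case. The paper asserts, for $\sigma(x)=2\cos(2\pi b/N)$ and $\beta=2\cos(2\pi\theta_0)$, the inequality $\log|2\cos(2\pi b/N)-2\cos(2\pi\theta_0)|_v \geq \log|b/N-\theta_0|_v$, and then invokes Corollary~\ref{linearcoro} once. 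But $2\cos A-2\cos B=-4\sin\tfrac{A+B}{2}\sin\tfrac{A-B}{2}$, so the left side carries the additional factor $|\sin(\pi(b/N+\theta_0))|$, which can be small when $\theta_0$ and $b/N$ are both near $0$ or both near $\pm\tfrac12$ (i.e.\ $\beta$ near $\pm 2$); the paper's one-line inequality is therefore not correct as written. Your identity $x-\beta=(\zeta-\omega)(\zeta-\omega^{-1})/\zeta$ and the two separate applications of Corollary~\ref{linearcoro}, to $\omega$ and to $\omega^{-1}$, give lower bounds on $|a/N-\theta_0|$ and $|a/N+\theta_0|$ individually and close this gap. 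You also use the correct orbit size $|\mathcal{P}|=\phi(N)/2$ where the paper writes $\phi(N)$, and you handle general (possibly nonreal) $\beta$ in the $|\omega|_v\neq 1$ case more cleanly than the paper's "$|\beta|_v>2$" step, which implicitly assumes $\beta$ real at $v$. The degree bookkeeping $[\Q(\omega):\Q]\leq 2[K:\Q]$ is harmless since the factor $8$ is absorbed into $C_\epsilon$. One small point worth spelling out in a final write-up: when you reduce $a/N\pm\theta_0$ by a nearest integer, you should note, as you briefly do, that the shifted numerator remains coprime to $N$, so the corollary's hypothesis $\gcd(a,N)=1$ is preserved; and the "Liouville-type bound" in the $|\omega|_v\neq1$ case deserves a sentence, e.g.\ applying the product formula to the nonzero algebraic number $\beta^2-4$ to bound $\bigl||\omega|_v-1\bigr|$ from below.
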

\begin{proof}
Given that  $\beta$ is not preperiodic point for the Chebyshev polynomial $\varphi$ and $v\mid \infty$. We may assume that $\beta\not \in [-2, 2]$. By Proposition \ref{prop2}, $\beta \neq \zeta +\zeta^{-1}$ where $\zeta$ is a root of unity. Since $x$ is preperiodic point, then $x$ is of the form $\xi+\xi^{-1}$ where $\xi$ is a root of unity. Now, for a preperiodic point $x$ and  $\sigma\in \mbox{Gal}(K(x)/K)$, we have
\begin{equation*}
|\sigma(x) - \beta|_v \leq |\sigma(x)|_v + |\beta|_v \leq |\beta|_v+2\;\; \end{equation*}
and 
\[|\sigma(x) - \beta|_v \geq ||\sigma(x)|_v - |\beta|_v|_v  \geq |\beta|_v - 2\] since $|\beta|_v>2$. Thus
we obtain $|\beta|_v - 2\leq  |\sigma(x) - \beta|_v \leq |\beta|_v+2.$ That is, the distance to any preperiodic point can be bounded from below by $|\beta|_v - 2$.
Now, $|\beta|_v$ is a real algebraic number in some field $K'$ of degree at most 2 larger than $K$.
Since $|\beta|_v^2 = \beta \bar{\beta} = $ where $\bar{\beta}$ is the complex conjugate, we have 
$ h(|\beta|_v^2) = h(\beta \bar{\beta}) =  h(\beta) + h(\bar{\beta}) = 2h(\beta)$. Also, by height property
\[
h(|\beta|_v - 2) \leq h(|\beta|_v) + h(2) + \log 2.
\]
Therefore,
\[
h(|\beta|_v - 2) \leq h(|\beta|_v) + h(2) + \log 2 \leq 2h(\beta) +2\log 2.
\]
Thus,
\[
\log |\sigma(x)-\beta|_v^{-1}\leq 2(h(\beta) + 1) \leq [K': \Q] (h(\beta) + 1).\] 

Now assume that $\beta \in [-2,2]$. We may write $\beta= e^{2\pi i \theta_0}+ e^{-2\pi i \theta_0} =2\cos(2\pi \theta_0)$ where $\theta_0\in \big(\frac{-1}{2},\frac{1}{2}\big]$. Note that $\beta$ can not be equal to $-2, 2$, or $0$ since we assume that $\beta$ is not preperiodic.  Note that  $x$ is any preperiodic point and is of the form $\xi+\xi^{-1}$ where $\xi$ is a root of unity, and $\sigma\in \mbox{Gal}(K(x)/K)$. We write 
\[\sigma(x) = e^{2\pi i \frac{a}{N}}+ e^{-2\pi i \frac{a}{N}}  = 2 \cos \left(2\pi \frac{a}{N}\right)\]
where $a$ and $N\neq 0$ are integers (depending on $x$) and $\left|\frac{a}{N}\right|\leq 1$. Now for all  $0<\delta<1$ and any conjugate $\sigma(x)$ of $x$ such that $\sigma(x)\in [\beta-\delta, \beta+\delta]$, we have then
\[\log|\sigma(x)- \beta|_v = \log \left|2\cos\left(2\pi \frac{b}{N}\right) -2\cos(2\pi \theta_0)\right|_v\geq \log \left|\frac{b}{N}-\theta_0\right|_v,\]
where $b$ and $N\neq 0$ are integers and $\gcd(b, N) =1$.
By Corollary \ref{linearcoro}, 
\begin{equation}
\log|\sigma(x)- \beta|_v \geq -C_{\epsilon} [K(\beta):K]^3 h(\beta) N^{\epsilon}. 
\end{equation}
Given that the Galois orbit \( \mathcal{P} \) of \( x \) has cardinality \( \varphi(N) \geq \sqrt{N} \), we derive that there exists a constant \( C_\epsilon > 0 \) such that for any \( \epsilon > 0 \), the following inequality holds:
\begin{equation*}
\max_{x \in \mathcal{P}} \log|x- \beta|_v^{-1} \leq C_{\epsilon} [K(\beta):K]^3 (h(\beta) +1)|\mathcal{P}|^{\epsilon}. 
\end{equation*}
\end{proof}
\begin{proposition}\label{prop3.2}
Let $\varphi$ be a Chebyshev polynomial and let $\alpha\in \mbox{PrePer}(\varphi, \bar{K})$. Fix a non-archimedean place $v$ of $\mathbb{Q}$ corresponding to the prime $p$, let $D$ be a positive integer and let $\delta >0$. Then there exists constants $C$ such that for any $\beta \in \mathbb{P}^1(K)$ with $[K:\mathbb{Q}] < D$,  we have 
	$$ \log|\alpha - \beta|_v^{-1} < \delta$$
if $|G_\mathbb{Q}(\alpha)| > C$.
\end{proposition}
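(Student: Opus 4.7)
By Proposition~\ref{prop2} we may write $\alpha=\zeta+\zeta^{-1}$ for some root of unity $\zeta$ of order $N$, so that $|G_{\mathbb{Q}}(\alpha)|=\phi(N)/2$ for $N\geq 3$ (with $\phi$ the Euler totient), and hence $|G_{\mathbb{Q}}(\alpha)|>C$ will force $N$ to be large. Write $N=p^k m$ with $\gcd(p,m)=1$, and via the canonical decomposition $\mu_N\simeq\mu_{p^k}\times\mu_m$ factor $\zeta=\zeta_{p^k}\zeta_m$; set $\alpha_m:=\zeta_m+\zeta_m^{-1}$. The argument splits according to whether $m$ or $p^k$ is responsible for the growth of $N$.

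\textbf{Case 1: $m>p^{2D}$.} The extension $\mathbb{Q}_p(\zeta_m)/\mathbb{Q}_p$ is unramified of degree $f=\mathrm{ord}_m(p)$, and $p^f\geq m+1>p^{2D}$ gives $f>2D$. Hence $[\mathbb{Q}_p(\alpha_m):\mathbb{Q}_p]\geq f/2>D$, so the residue $\bar\alpha_m$ generates a residue-field extension of degree $>D$ over $\mathbb{F}_p$. Since $\zeta_{p^k}\equiv 1$ modulo the maximal ideal, $\bar\alpha=\bar\alpha_m$, whereas any $\beta\in K$ with $[K:\mathbb{Q}]<D$ has $[\mathbb{F}_p(\bar\beta):\mathbb{F}_p]<D$. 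Thus $\bar\alpha\neq\bar\beta$, forcing $|\alpha-\beta|_v=1$ when $|\beta|_v\leq 1$ and $|\alpha-\beta|_v=|\beta|_v>1$ otherwise; in either case $\log|\alpha-\beta|_v^{-1}\leq 0<\delta$.

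\textbf{Case 2: $m\leq p^{2D}$.} Since $|G_{\mathbb{Q}}(\alpha)|=\phi(p^k)\phi(m)/2\leq\phi(p^k)p^{2D}/2$, the hypothesis now forces $\phi(p^k)\to\infty$ with $C$. Set $\eta:=\zeta_{p^k}-1$, so $v_p(\eta)=1/\phi(p^k)$. A direct expansion yields
\[\alpha-\alpha_m=\frac{\eta}{1+\eta}\bigl(\zeta_m-\zeta_m^{-1}+\eta\,\zeta_m\bigr),\]
whence $v_p(\alpha-\alpha_m)=1/\phi(p^k)$ for $m>2$ and $v_p(\alpha-\alpha_m)=2/\phi(p^k)$ for $m\in\{1,2\}$. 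Because $\mathbb{Q}_p(\alpha_m)/\mathbb{Q}_p$ is unramified and $e(K_v/\mathbb{Q}_p)<D$, the compositum $K_v(\alpha_m)/\mathbb{Q}_p$ has ramification index $e'<D$, so every nonzero value $v_p(\alpha_m-\beta)$ lies in $(1/e')\mathbb{Z}$ with denominator strictly less than $\phi(p^k)$ once $\phi(p^k)\geq D$. The ultrametric inequality is then a strict maximum, giving
\[|\alpha-\beta|_v=\max\bigl(|\alpha-\alpha_m|_v,\,|\alpha_m-\beta|_v\bigr)\geq|\alpha-\alpha_m|_v\geq p^{-2/\phi(p^k)},\]
so $\log|\alpha-\beta|_v^{-1}<\delta$ whenever $\phi(p^k)>2(\log p)/\delta$. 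Taking $C:=\max(D,2(\log p)/\delta)\cdot p^{2D}$ handles both cases.

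The main obstacle will be the denominator-mismatch step in Case 2: one has to rule out cancellation between $\alpha-\alpha_m$ and $\alpha_m-\beta$ when $\beta$ is chosen adversarially, and this comes down to the fact that the cyclotomic $p$-power ramification $\phi(p^k)$ grows without bound while the ramification available inside $K$ is capped by $D$. The dichotomy between $m$ large and $p^k$ large is essential because it isolates the two independent mechanisms by which $|G_{\mathbb{Q}}(\alpha)|$ can grow, each demanding its own argument---residue-field separation in Case 1 and the ramification mismatch in Case 2.
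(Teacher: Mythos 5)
Your proof is correct but follows a genuinely different route from the paper's. The paper fixes $\epsilon\in(0,1)$ and shows that at most $C=p(\log p)\epsilon^{-1}$ Galois conjugates $\sigma(\alpha)=\zeta_k+\zeta_k^{-1}$ can satisfy $|\sigma(\alpha)-\beta|_v<1-\epsilon$: if two do, then $|\alpha_k-\alpha_\ell|_v=|1-\zeta_\ell^{-1}\zeta_k|_v\,|1-(\zeta_\ell\zeta_k)^{-1}|_v\leq 1-\epsilon$ forces one of these cyclotomic factors to be small, which by the valuation formula $|1-\xi|_v=p^{-1/((p-1)p^{n-1})}$ restricts the $p$-power order of the root of unity involved and hence bounds the number of ``near'' conjugates. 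Your argument instead decomposes $\zeta=\zeta_{p^k}\zeta_m$ and dichotomizes on whether the growth of $|G_{\mathbb Q}(\alpha)|=\phi(N)/2$ comes from $m$ or from $p^k$: when $m$ is large the residue field $\mathbb F_p(\bar\alpha_m)$ has degree $>D$ so $\bar\alpha\neq\bar\beta$ and $|\alpha-\beta|_v=1$, and when $\phi(p^k)$ is large the identity $\alpha-\alpha_m=\tfrac{\eta}{1+\eta}(\zeta_m-\zeta_m^{-1}+\eta\zeta_m)$ (with $\eta=\zeta_{p^k}-1$) pins $v_p(\alpha-\alpha_m)$ to $1/\phi(p^k)$ or $2/\phi(p^k)$, a value strictly smaller than any nonzero element of $(1/e')\mathbb Z$ available in $K_v(\alpha_m)$, so the ultrametric inequality is sharp and $|\alpha-\beta|_v\geq p^{-2/\phi(p^k)}$. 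Your version is more structural and yields a cleaner, explicit lower bound directly for the point $\alpha$ itself, whereas the paper counts ``near'' conjugates (and its passage from that count to the conclusion about $\alpha$ is stated rather loosely); the paper's argument is shorter and sidesteps any case split. Two small remarks on your write-up: you should note explicitly that $|G_{\mathbb Q}(\alpha)|>C\geq p^{2D}/2$ forces $k\geq 1$ in Case 2 (so that $\eta\neq 0$), and the phrase about ``a strict maximum'' should be amplified to observe that when $v_p(\alpha_m-\beta)=0$ one gets $|\alpha-\beta|_v=1$ while when $v_p(\alpha_m-\beta)\geq 1/e'>2/\phi(p^k)$ one gets $|\alpha-\beta|_v=|\alpha-\alpha_m|_v$; in both sub-cases the bound $\geq p^{-2/\phi(p^k)}$ holds, which is what you assert.
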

\begin{proof}
If \( |\beta|_v > 1 \), then $|\alpha- \beta|_v \leq \max \{ |\alpha|_v,  |\beta|_v \} = |\beta|_v$. Thus, $\log |\alpha - \beta|_v \leq \log |\beta|_v$. Setting  $\delta =\log |\beta|_v>0$, we have 
	\[
	\log |\alpha - \beta|^{-1}_v  < \delta.
	\]
Next suppose $|\beta|_v \leq 1$. Let $\epsilon \in (0,1)$ be a real number. For $\sigma, \tau \in \mbox{Gal}(\bar{K}/K)$, suppose that $ \alpha_k:=\sigma(\alpha) = \zeta_k+\zeta_k^{-1}$ and $\alpha_l:=\tau(\alpha) = \zeta_{\ell}+\zeta_{\ell}^{-1}$ where $\zeta_k$ and $\zeta_{\ell}$ are $k$-th and $\ell$-th root of unity, respectively with $|\alpha_k - \beta|_v < 1-\epsilon,\; |\alpha_{\ell} - \beta|_v < 1-\epsilon$.
Then 
\begin{align*}
1-\epsilon & \geq | (\alpha_{\ell} - \beta) - (\alpha_k - \beta) |_v\\
& = |\alpha_{\ell}-\alpha_k|_v= \left|(\zeta_{\ell}-\zeta_k)- \frac{(\zeta_{\ell}-\zeta_k)}{\zeta_{\ell}\zeta_k}\right|_v\\
&= |\zeta_{\ell}-\zeta_k|_v|1-(\zeta_{\ell}\zeta_k)^{-1}|_v = |1-\zeta_{\ell}^{-1}\zeta_k|_v|1-(\zeta_{\ell}\zeta_k)^{-1}|_v.
\end{align*}
Hence, either $|1-\zeta_{\ell}^{-1}\zeta_k|_v\leq \sqrt{1-\epsilon}$ or $|1-(\zeta_{\ell}\zeta_k)^{-1}|_v\leq \sqrt{1-\epsilon}$. Further, since $p$ is the rational prime under $v$, the only roots of unity $\xi\in \bar{K}_v$ with $|1-\xi|_v\leq 1-\epsilon$ are those with order $p^n$ for some $n\geq 0$ and also $\xi-1$ must be a unit if $\xi$ has order divisible by at least two distinct prime numbers. If $n\geq 1$,
\[1-\epsilon\geq |\xi-1|_v = p^{-\frac{1}{(p-1)p^{n-1}}}\geq p^{-1/p^{n-1}}.\] 
By taking logarithms we will get, $-\log (1-\epsilon) \leq (\log p)/p^{n-1}$. Setting $-\log (1-\epsilon)\geq \epsilon$, we deduce $p^{n-1}\leq (\log p)\epsilon^{-1}$. Let $n_0$ be the largest integer with $p^{n_0}\leq p(\log p)\epsilon^{-1}$. Then $n_0\geq 0$ and $n\leq n_0$, so $\xi^{p^{n_0}} =1$. So we have at most $p(\log p)\epsilon^{-1}$ possibilities of $\xi$ such that $|1-\xi|_v\leq 1-\epsilon$. Therefore, there are only $C:= p(\log p)\epsilon^{-1}$ many Galois conjugates of $\alpha$ such that $|\alpha- \beta|_v<p^{-\frac{1}{(p-1)p^{n-1}}}$. Thus, for $|G_K (\alpha)|> C$, we have $|\alpha-\beta|_v \geq p^{-\frac{1}{(p-1)p^{n-1}}}$ and hence
\[\log|\alpha-\beta|_v^{-1}<\frac{1}{p^{n-1}(p-1)}\log p <\delta.\]
\end{proof}
From the proof of the above result, we have the following:
\begin{coro}\label{cor3.3}
	Let  $\varphi$ be the Chebyshev polynomial and $v$ of $\mathbb{Q}$ be a non-archimedean place. Then for any $\beta \in \mathbb{P}^{1}(\bar{\mathbb{Q}})$, there do not exist two distinct preperiodic points $\alpha_1,\alpha_2$ such that $$\log|\beta-\alpha_i|_v^{-1} \ge\frac{1}{p-1}\log p,
	$$ where $p$ is a prime number corresponding to the place $v$
\end{coro}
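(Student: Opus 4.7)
The plan is to argue by contradiction using exactly the same ultrametric--factorization machinery that already appears inside the proof of Proposition \ref{prop3.2}. First I would suppose, toward a contradiction, that there exist two distinct preperiodic points $\alpha_1 \neq \alpha_2$ both satisfying $|\beta - \alpha_i|_v \leq p^{-1/(p-1)}$. Since $v$ is non-archimedean, the strong triangle inequality immediately yields
\[
|\alpha_1 - \alpha_2|_v \;\leq\; \max\{|\beta - \alpha_1|_v,\, |\beta - \alpha_2|_v\} \;\leq\; p^{-1/(p-1)}.
\]

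Next, invoking Proposition \ref{prop2}, I would write $\alpha_i = \zeta_i + \zeta_i^{-1}$ with each $\zeta_i$ a root of unity, and reuse the factorization that was computed inside the proof of Proposition \ref{prop3.2}:
\[
|\alpha_1 - \alpha_2|_v \;=\; |1 - \zeta_1 \zeta_2^{-1}|_v \cdot |1 - (\zeta_1 \zeta_2)^{-1}|_v.
\]
Because $\alpha_1 \neq \alpha_2$, we have $\zeta_1 \neq \zeta_2$ and $\zeta_1 \neq \zeta_2^{-1}$, so neither $\zeta_1\zeta_2^{-1}$ nor $\zeta_1\zeta_2$ equals $1$, and both factors above come from genuinely non-trivial roots of unity.

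The key input, already isolated in the proof of Proposition \ref{prop3.2}, is that for a non-trivial root of unity $\xi \in \bar{\mathbb{Q}}_v$ we have $|1-\xi|_v = 1$ unless $\xi$ has order $p^n$ with $n\geq 1$, in which case $|1-\xi|_v = p^{-1/((p-1)p^{n-1})}$, and in particular $|1-\xi|_v \geq p^{-1/(p-1)}$. I would finish by a short case analysis on which of the two factors above equals $1$ and which is strictly less than $1$, combining these lower bounds with the upper bound $|\alpha_1 - \alpha_2|_v \leq p^{-1/(p-1)}$ to extract the contradiction; the cleanest bookkeeping is to read off the final line of the proof of Proposition \ref{prop3.2} specialized to $n = 1$, which already produces exactly the constant $\tfrac{1}{p-1}\log p$.

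The main obstacle is the scenario in which \emph{both} $\zeta_1 \zeta_2^{-1}$ and $\zeta_1\zeta_2$ happen to have $p$-power order simultaneously: there the two individual lower bounds $|1-\cdot|_v \geq p^{-1/(p-1)}$ only combine multiplicatively to give $|\alpha_1 - \alpha_2|_v \geq p^{-2/(p-1)}$, which on its face does not contradict the bound $\leq p^{-1/(p-1)}$. Handling this case is exactly where one must invoke the sharp form of the observation isolated in Proposition \ref{prop3.2} (namely, that one of the two factors already by itself carries enough of the loss to force $\xi \in \{1\}$, hence $\alpha_1 = \alpha_2$), which is the heart of the argument.
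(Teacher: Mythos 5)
Your approach mirrors the paper's own proof exactly: assume both bounds hold, apply the strong triangle inequality, factor $|\alpha_1-\alpha_2|_v = |1-\zeta_1\zeta_2^{-1}|_v\,|1-(\zeta_1\zeta_2)^{-1}|_v$, and seek a contradiction from the lower bound $|1-\xi|_v \geq p^{-1/(p-1)}$ for nontrivial roots of unity $\xi$. To your credit, you put your finger on precisely the weak point: when both $\zeta_1\zeta_2^{-1}$ and $\zeta_1\zeta_2$ are nontrivial roots of unity of $p$-power order, the two factors only combine to give $|\alpha_1-\alpha_2|_v \geq p^{-2/(p-1)}$, which does \emph{not} contradict the bound $|\alpha_1-\alpha_2|_v \leq p^{-1/(p-1)}$ obtained from the ultrametric inequality.

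The trouble is that the resolution you gesture at --- that one of the two factors must by itself force $\xi = 1$ --- is not available, and the case you flagged is in fact genuinely realizable. Take $p \geq 7$, let $\zeta$ be a primitive $p$-th root of unity in $\bar{\mathbb{Q}}_p$, and put $\alpha_1 = \zeta+\zeta^{-1}$, $\alpha_2 = \zeta^{2}+\zeta^{-2}$, which are distinct preperiodic points. Here $\zeta_1\zeta_2^{-1}=\zeta^{-1}$ and $\zeta_1\zeta_2=\zeta^{3}$ are both primitive $p$-th roots of unity, so each factor equals $p^{-1/(p-1)}$ and $|\alpha_1-\alpha_2|_v = p^{-2/(p-1)} \leq p^{-1/(p-1)}$. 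Taking $\beta$ $v$-adically within $p^{-2/(p-1)}$ of $\alpha_1$ then makes both hypotheses of the corollary hold for the distinct points $\alpha_1,\alpha_2$. So your sketch has a real gap at exactly the spot you identified, and you should also be aware that the paper's own proof asserts a contradiction at that same step without addressing the two-factor scenario; as written, the corollary would need either a larger threshold (strictly bigger than $\tfrac{2}{p-1}\log p$) or an additional hypothesis that rules out $\zeta_1\zeta_2$ and $\zeta_1\zeta_2^{-1}$ simultaneously having $p$-power order.
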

\begin{proof}
Let $\beta \in \mathbb{P}^{1}(\bar{\mathbb{Q}})$ be arbitrary. Suppose on contrary, we have
		\begin{align*}
		&\log|\beta-\alpha_i|_v^{-1} \ge \frac{1}{p-1}\log p = \log p^{\frac{1}{p-1}}
		\end{align*} for $i=1, 2$.
This implies $|\beta-\alpha_i|_v < 1/p^{\frac{1}{p-1}}$.  Thus, 
\[|\alpha_1-\alpha_2|_v =|(\alpha_1-\beta) - (\alpha_2-\beta)|_v= |1-\zeta_m^{-1}\zeta_n \big|_v|1-\big(\zeta_m\zeta_n\big)^{-1}\big|_v <1/p^{\frac{1}{p-1}},\] which is a contradiction, as $|1-\zeta|_v$ is at least $1/p^{\frac{1}{p-1}}$ for any root of unit $\zeta$.
\end{proof}

\subsection{Proof of Theorem \ref{thm2}}
First we consider a finite extension $L$ of $K$ of degree at most $D$ such that for each non-archimedean place $v\in S$ Chebyshev dynamical system has good reduction at $v$. Indeed, for any finite extension $L$ of $K$, we can take $S_L$ to be the set of primes in $L$ lying over the primes in $S$ and note that the points $S_L$-integral relative to $\beta$ contains those points which are $S$-integral relative to $\beta$. Therefore, proving the theorem for the larger field $L$ establishes it for the smaller field $K$. 

Let $\epsilon>0$ and $\beta\in K^{\times}$ be non-preperiodic (i.e., not of the form $\zeta+\zeta^{-1}$ for any root of unity $\zeta$). Applying Proposition \ref{prop3.1} and  Corollary \ref{cor3.3}, we can find a constant $A_{\epsilon}$ such that for place $w\in S_L$, we have
\begin{equation*}
 \max_{\alpha\in \mathcal{P}} \log |\alpha-\beta|_w^{-1} < A_\epsilon D^3(h(\beta)+1)|\mathcal{P}|^\epsilon.
\end{equation*}
for all $\mbox{Gal}(\bar{K}/K)$-orbits $\mathcal{P}$ of preperiodic points with the possible exception of one orbit for each place $w$. But if both $w$ and $r$ are above the same place $v$, they differ by an automorphism of $\mbox{Gal}(\bar{K}/K)$, and so we will pick up the same $\mbox{Gal}(\bar{K}/K)$-orbit $\mathcal{P}$ as an exception. Thus, we get total $|S_{\mbox{fin}}|$ exceptions.

 Now let $\alpha$ be a preperiodic point and is not one of the  above exceptions. Then by Proposition \ref{quntilogprop2}, there is a constant $C>0$, depending only on $\varphi$, such that for any $w\in S_L$, we have
	\begin{equation}\label{eq3.2}
		\Big|\frac{1}{|\mathcal{P}|}\sum_{\alpha\in \mathcal{P}} \lambda_{\alpha, w}(\beta)-\int \lambda_{\alpha, w}(\beta) d\mu_{\varphi, w}\Big| \le \frac{C}{|\mathcal{P}|^\delta} \sqrt{\log |\mathcal{P}|}A(h(\beta) + \log^+|\beta|_w+1)
	\end{equation}  
Setting $\delta=\frac{1}{2}-\epsilon, A=A_\epsilon (\log D)^2D^3$ in \eqref{eq3.2}, we get
	\begin{align*}
		\Big|\frac{1}{|\mathcal{P}|}\sum_{\alpha\in \mathcal{P}} &\lambda_{\alpha, w}(\beta)-\int \lambda_{\alpha, w}(\beta) d\mu_{\varphi, w}\Big| \\
		&\le \frac{CA_\epsilon}{|\mathcal{P}|^{\frac{1}{2}-\epsilon}} \sqrt{\log |\mathcal{P}|}D^3(\log D)^2(h(\beta) + \log^+|\beta|_w+1).
\end{align*}
For any constant $N > 0$ assuming that $|\mathcal{P}|>C|S_L|^3D^{12}$ for some suitable $C$ gives, 
	\begin{equation*}
		\Big|\frac{1}{|\mathcal{P}|}\sum_{\alpha\in \mathcal{P}}\lambda_{\alpha, w}(\beta)-\int \lambda_{\alpha, w}(\beta) d\mu_{\varphi, w}\Big| \le \frac{h(\beta) + \log^+|\beta|_w+1}{N|S_L|D^{2.5}}.
	\end{equation*}
Summing up over all places in $S_L$, as $\sum_{w \in S_L} N_w \log^+|\beta|_w \le h(\beta)$, by increasing our constant $C$ we get	\begin{align*}
			\Big|\frac{1}{|\mathcal{P}|}\sum_{w \in S_L}\sum_{\alpha\in \mathcal{P}}N_w \lambda_{\alpha, w}(\beta)&-\sum_{w \in S_L} \int N_w \lambda_{\alpha, w}(\beta) d\mu_{\varphi, w}\Big| \\
			&\le \sum_{w \in S_L} N_w \frac{h(\beta) + \log^+|\beta|_w+1}{N|S_L|D^{2.5}} \\
			& = \sum_{w \in S_L} N_w \frac{h(\beta)+1}{N|S_L|D^{2.5}} + \sum_{w \in S_L} N_w \frac{\log^+|\beta|_w}{N|S_L|D^{2.5}} \\
			& \le \frac{h(\beta)+1}{ND^{2.5}} + \frac{h(\beta)}{N|S_L|D^{2.5}} = \frac{h(\beta) +1}{ND^{2.5}}.
\end{align*}
The last inequality holds for sufficiently large of $|S_L|$. 
Since $\mathcal{P}$ is any Galois orbit of some preperiodic point of  the Chebyshev map $\varphi$, then $h_\varphi(\mathcal{P})=0$. By using Proposition \ref{prop3}, we will get 
	\begin{align*}
		\big| h_{\bar{L}_{\beta}}(\mathcal{P})-\big<\bar{L}_\beta,\bar{L}_\varphi\big>\big| &\le C_{AZ, \varphi} \Big( \frac{1+\log |\mathcal{P}|^{1/2}}{|\mathcal{P}|^{1/2}}\Big) <\frac{1}{D^{2.5}}.
	\end{align*}
   As $\mathcal{P}$ is $S$-integral relative to $\beta$, we have $\lambda_{\alpha, w}(\beta)=0$ for all $w \notin S_L$ and $\alpha \in \mathcal{P}$.
	Therefore,
	\begin{align*}
			\frac{1}{|\mathcal{P}|}\sum_{w \in M_{K}}\sum_{\alpha\in \mathcal{P}}N_w \lambda_{\alpha, w}(\beta)-\sum_{w \in M_{K}} \int N_w\lambda_{\alpha, w}(\beta) d\mu_{\varphi, w} &  =h_{\bar{L}_\beta}(\mathcal{P})-\big<\bar{L}_\beta,\bar{L}_\varphi\big> +h_\varphi(\beta)\\
			& \ge \frac{-1}{D^{2.5}}+h_{\varphi}(\beta).
	\end{align*}
So, we have 
\begin{align*}
h_\varphi(\beta) = h(\beta)&\le \frac{h(\beta)+1}{ND^{2.5}} + \frac{1}{D^{2.5}}.
	\end{align*}
If $h(\beta) \ge 1$, then $ND^{2.5}\leq N+2$, which is not possible for $D>1$. Now if  $h(\beta) <1$, then by using Dobrowolski's result \cite{dob} implies that 
	$$h(\beta) \ge \frac{C}{D(\log D)^3}.$$
This gives,
	\begin{align*}
		\frac{C}{D(\log D)^3}\Big(1-\frac{1}{ND^{2.5}}\Big)
		 &\le \frac{1}{ND^{2.5}}+\frac{1}{D^{2.5}}.
	\end{align*}
If $N$ is large enough, then $\frac{C}{D(\log D)^3} \le \frac{1}{D^{2.5}}$ which is a contradiction. Hence, $|\mathcal{P}| \le C|S_L| D^{12}$. This completes the proof of Theorem \ref{thm2}. \qed

Similarly, we can deduce the proof of Theorem \ref{thm1} from Theorem \ref{thm2} by using Proposition \ref{prop3.2} instead of Corollary \ref{cor3.3}.

\end{document}